\documentclass[12pt]{amsart}
\usepackage{amsmath}
\usepackage{amsthm}
\usepackage{amssymb}
\usepackage{amscd}
\usepackage{bbold}
\usepackage[pdftex]{graphicx}
\usepackage{enumerate }
\usepackage{subfigure}

\newtheorem{lem}{Lemma}[section]
\newtheorem{thm}[lem]{Theorem}
\newtheorem{prop}[lem]{Proposition}

\theoremstyle{definition}
\newtheorem{defn}[lem]{Definition}
\newtheorem{example}[lem]{Example}

\newtheorem{rem}[lem]{Remark}

\def\benm{\begin{enumerate}}            % Begin enumerate command
\def\eenm{\end{enumerate}}              % End enumerate command
\title{ B-Frames, B-Riesz bases, and Their tensor products }

\author{Abdelkrim Bourouihiya$^2$}

\address{$^2$ Department of Mathematics, Nova Southeastern University
3301 College Avenue, Fort Lauderdale, Florida, USA}
\email{ab1221@nova.edu}

\author{Chaimae Mezzat$^1$ $^*$}

\address{$^1$ Department of Mathematics, Faculty of Science, Ibn Tofail University,
B.P. 133, Kenitra, Morocco.}
\email{chaimae.mezzat@uit.ac.ma}

\date{\today}
\subjclass[2010]{46B15, 46A35, 42C15, 47B02, 47A07, 	46M05, 46A32.}

\keywords{frame, b-frame, g-frame, tensor product of frames, Riesz bases, b-Riesz basis.}

\begin{document}
\newcounter{bean}
\maketitle
 \begin{abstract}
Like g-frames, b-frames were introduced to generalize the concept of frames, allowing for broader applications in signal processing and other fields. The advantage of b-frames resides in their simpler definition, which may lead to reduced processing times. In this paper, we define dual b-frames and b-Riesz bases—which were not precisely defined in previous literature—and provide several characterizations of b-Riesz bases. We prove that the tensor product of two sequences, each lying in a Hilbert space, constitutes a b-frame (or a b-Riesz basis) if and only if both components of the product are b-frames (or b-Riesz bases). Finally, we establish a correspondence between b-frames and g-frames and propose a process for constructing frames induced by b-frames.
\end{abstract}

%%%%%%%%%%%%%%%%%%%%%%%%%%%%%%%%%
\section{Introduction}
\label{sec:1}
Throughout this paper, \(\mathcal{B}\) denotes a Banach space, while \(\mathcal{H}\) and \(\mathcal{Z}\) represent two separable Hilbert spaces. If $X$ and $Y$ are two Banach spaces, $\mathcal{L}(X,Y)$ denotes the space of bounded operators $: X \to Y$. We denote by $\mathcal{K}$ a set of countable indices.

A sequence  $\{f_k\}_{k \in \mathcal{K}} \subset \mathcal{H}$ is a frame with the frame bounds $A,B>0$ if
\begin{eqnarray}\label{frame}
A \|h \|^2_{\mathcal{H}} \leq \sum_{k \in \mathcal{K}} | \langle h , f_k \rangle_{\mathcal{H}} |^2_{\mathcal{H}}  \leq  B \|h\|^2 _{\mathcal{H}}, \quad \forall h \in \mathcal{H}.
\end{eqnarray}

A g-frame for  $\mathcal{Z}$ with respect to a sequence of Hilbert spaces $ \{\mathcal{Z}_k\}_{k \in \mathcal{K}} $ is a family of bounded linear operators $ \{ \Lambda_k \in \mathcal{L}(\mathcal{Z}, \mathcal{Z}_k )\}_{k \in \mathcal{K}} $ such that:
\begin{eqnarray}\label{gframe}
A\|z\|^2_\mathcal{Z} \leq  \sum_{k \in \mathcal{K}}\| \Lambda_k(z) \|^2_{\mathcal{Z}_k} \leq B \| z \|^2_\mathcal{Z}, \quad \forall z \in \mathcal{Z}.
\end{eqnarray}
$A$ and $B$ are called the g-frame bounds of the g-frame $ \{ \Lambda_k \}_{k \in \mathcal{K}} $.

Let  $\mathfrak{b}: \mathcal{H }\times \mathcal{B} \to \mathcal{Z }$ be a continuous bilinear mapping, i.e, there is $M>0$ such that
\begin{eqnarray*}
\|\mathfrak{b}(h,x)\|_{\mathcal{Z}} \leq M \|h \|_{\mathcal{H}} \|x \|_{\mathcal{B}}, \quad \forall (h,x) \in \mathcal{H }\times \mathcal{B}
\end{eqnarray*}

Fixing $x \in \mathcal{B}$ and $z \in \mathcal{Z}$, $h  \rightarrow \langle \mathfrak{b}(h,x), z \rangle_{\mathcal{Z}} $ defines a bounded linear form on $\mathcal{H}$. Hence, Riesz representation theorem implies the existence of a unique vector $v \in  \mathcal{H}$ such that
\begin{eqnarray*}
\langle \mathfrak{b}(h,x), z \rangle_{\mathcal{Z}} = \langle h, v \rangle_{\mathcal{H}}, \forall h \in \mathcal{H}.
\end{eqnarray*}
We define $ \langle z / x \rangle= v$ the  so called b-frame product of $z$ and $x$. Hence, we can write
\begin{eqnarray}\label{bproduct}
\langle \mathfrak{b}(h,x), z \rangle_{\mathcal{Z}} = \langle h, \langle z / x \rangle\rangle_{\mathcal{H}}, \forall (h,x,z) \in \mathcal{H} \times  \mathcal{B} \times   \mathcal{Z}.
\end{eqnarray}

$\{x_k\}_{k>0} \subset \mathcal{B}$ is called a b-frame for $\mathcal{Z}$ if there are two positive constants $A$ and $B$, called the b-frame bounds, such that
\begin{eqnarray}\label{bframe}
A \|z \|^2_{\mathcal{Z}} \leq \sum_{k \in \mathcal{K}} \| \langle z / x_k \rangle \|^2_{\mathcal{H}}  \leq  B \|z\|^2 _{\mathcal{Z}}, \quad \forall z \in \mathcal{Z}.
\end{eqnarray}

In addition to generalizing frames, fusion frames, and other types of frames, g-frames serve as a more robust alternative to traditional frames in some  applications \cite{Kho}. They offer more freedom when using multiresolution analysis \cite{Ism}, as well as a more advanced mathematical language for mapping quantum measurements in quantum detection \cite{Zha}. As another generalization of frames, b-frames were introduced by Ismailov et al. \cite{Ism}. Following this paper, several other publications explored b-frames, their properties, and their extensions to more general classes, including k-b-frames \cite{Mez}. While a g-frame is defined by a sequence of operators paired with a sequence of Hilbert spaces, a b-frame is defined using a single bilinear mapping and three spaces. This simpler definition may translate to fewer required computational steps, lowering processing times and rendering large-scale signal transformations more viable for practical, real-time scenarios.

In Section 2, we define the canonical dual of a b-frame and use it to formulate the b-frame expansion. We also establish a correspondence between a b-frame and a frame. In Section 3, we define b-Riesz bases and provide several characterizations of them. These concepts—the canonical dual, the b-frame expansion, and b-Riesz bases—were not addressed in previous papers on b-frames. In Section 4, we prove that the tensor product of two sequences, each lying in a Hilbert space, constitutes a b-frame (or a b-Riesz basis) if and only if each component sequence is a b-frame (or a b-Riesz basis). In Section 5, we show that every g-frame (or g-Riesz basis) corresponds to a b-frame (or b-Riesz basis), and vice versa. Furthermore, the b-frame and g-frame share the same bounds, and the b-frame operator coincides with the g-frame operator. In Section 6, we prove a theorem that can serve to construct frames for a Hilbert space $\mathcal{Z}$ induced by b-frames for $\mathcal{Z}$ and frames for another Hilbert space $\mathcal{H}$.
%%%%%%%%%%%%%%%%%%%%%%%%%%%%%%%%%%%%%%%%%%%%%%%%%%%%%%%%
\section{Dual b-frame and b-frame expansion }
\label{sec:2}
The b-frame operator $S:\mathcal{Z} \to \mathcal{Z} $ of a b-frame $(\{x_k\}_{k \in \mathcal{K}},\mathfrak{b})$ is defined by
\begin{eqnarray}\label{operator}
S(z)=\sum_{k \in \mathcal{K}} \mathfrak{b}(\langle z / x_k \rangle,x_k ).
\end{eqnarray}
$S$ is a positive, self-adjoint, and bounded invertible operator \cite{Ism}.

We define $\tilde{\mathfrak{b}}=S^{-1}\mathfrak{b}$. Therefore, $\tilde{\mathfrak{b}}: \mathcal{H }\times \mathcal{B} \to \mathcal{Z }$ is a continuous bilinear mapping since
\begin{eqnarray*}
 \|\tilde{\mathfrak{b}}(h,x) \|_{\mathcal{Z}} =   \| S^{-1}\mathfrak{b}(h,x) \|_{\mathcal{Z}}\leq M  \|S^{-1}  \|  \|h \|_{\mathcal{H}} \|x \|_{\mathcal{B}}, \quad \forall (h,x) \in \mathcal{H }\times \mathcal{B}
\end{eqnarray*}

\begin{prop}\label{bdual} Let $\{e_j\}_{j \in \mathcal{K}}$ be an orthonormal basis  for $\mathcal{H}$ and let $\{x_k\}_{k \in \mathcal{K}} \subset \mathcal{B}$.
\begin{enumerate}
\item $(\{x_k\}_{k \in \mathcal{K}},\mathfrak{b})$ is a b-frame for $\mathcal{Z}$ with the b-frame bounds $A$ and $B$  if and only if $\{\mathfrak{b}(e_j,x_k)\}_{j,k \in \mathcal{K}}$ is a frame for $\mathcal{Z}$ with the frame bounds $A$ and $B$.
  \item Suppose $(\{x_k\}_{k \in \mathcal{K}},\mathfrak{b})$ is a b-frame for $\mathcal{Z}$.
  \begin{enumerate}
    \item $S$ is the b-frame operator of $(\{x_k\}_{k \in \mathcal{K}},\mathfrak{b})$, if and only if $S$ is the frame operator of $\{b(e_j,x_k)\}_{j,k \in \mathcal{K}}$.
     \item $(\{x_k\}_{k \in \mathcal{K}},\tilde{\mathfrak{b}})$ is a b-frame for $\mathcal{Z}$ with the frame bounds $B^{-1}$ and $A^{-1}$. We call $(\{x_k\}_{k \in \mathcal{K}},\tilde{\mathfrak{b}})$ the canonical dual b-frame of the b-frame $(\{x_k\}_{k \in \mathcal{K}},\mathfrak{b})$.
    \item For each $z \in \mathcal{Z}$, we have the following b-frame expansion
    \begin{eqnarray}\label{expansion}
z=\sum_{k \in \mathcal{K}}\tilde{\mathfrak{ b }}(\langle z / x_k \rangle,x_k )=\sum_{k \in \mathcal{K}} \mathfrak{ b }(\langle z / x_k \rangle\tilde{ \ },x_k ),
\end{eqnarray}
where $\langle . / . \rangle\tilde{\ }$ is the b-frame product with respect to $\tilde{\mathfrak{ b }}$.
\end{enumerate}
\end{enumerate}
 \end{prop}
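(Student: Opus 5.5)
The whole proposition rests on one identity, which I will prove first. Fix $z\in\mathcal{Z}$ and $k\in\mathcal{K}$. Expanding $\langle z / x_k\rangle\in\mathcal{H}$ in the orthonormal basis $\{e_j\}$ and using the defining relation (\ref{bproduct}) with $h=e_j$ gives
\begin{eqnarray*}
\langle z / x_k\rangle=\sum_{j}\langle \langle z / x_k\rangle, e_j\rangle_{\mathcal{H}}\, e_j=\sum_j \langle z,\mathfrak{b}(e_j,x_k)\rangle_{\mathcal{Z}}\, e_j .
\end{eqnarray*}
Parseval then yields $\|\langle z / x_k\rangle\|^2_{\mathcal{H}}=\sum_j|\langle z,\mathfrak{b}(e_j,x_k)\rangle_{\mathcal{Z}}|^2$. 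Summing over $k$, the middle term of (\ref{bframe}) equals $\sum_{j,k}|\langle z,\mathfrak{b}(e_j,x_k)\rangle|^2$; all terms are nonnegative, so the double sum may be reordered freely. The two inequality chains are therefore literally the same, with the same $A$ and $B$, and part (1) follows at once.

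For (2a), I substitute the expansion of $\langle z / x_k\rangle$ into (\ref{operator}). Since $\mathfrak{b}(\cdot,x_k)$ is bounded and linear, it passes through the norm-convergent series:
\begin{eqnarray*}
\mathfrak{b}(\langle z / x_k\rangle,x_k)=\sum_j\langle z,\mathfrak{b}(e_j,x_k)\rangle\,\mathfrak{b}(e_j,x_k).
\end{eqnarray*}
Summing over $k$ gives $S z=\sum_{j,k}\langle z,\mathfrak{b}(e_j,x_k)\rangle\,\mathfrak{b}(e_j,x_k)$, which is the frame operator of $\{\mathfrak{b}(e_j,x_k)\}$. The main technical point is to justify that the iterated sum over $k$ and then $j$ agrees with the unconditionally convergent double sum defining the frame operator. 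This holds because, by part (1), $\{\mathfrak{b}(e_j,x_k)\}$ is a frame, hence Bessel, so its synthesis series converges unconditionally for the $\ell^2$ coefficients $\{\langle z,\mathfrak{b}(e_j,x_k)\rangle\}$. Consequently the b-frame operator and the frame operator coincide, and this gives both directions of the ``if and only if.''

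For (2b), note that $\tilde{\mathfrak{b}}(e_j,x_k)=S^{-1}\mathfrak{b}(e_j,x_k)$. By (2a), $S$ is the frame operator of $\{\mathfrak{b}(e_j,x_k)\}$, so $\{S^{-1}\mathfrak{b}(e_j,x_k)\}$ is its canonical dual frame. By standard frame theory, this dual has bounds $B^{-1}$ and $A^{-1}$. Applying part (1) to the continuous bilinear map $\tilde{\mathfrak{b}}$ shows that $(\{x_k\},\tilde{\mathfrak{b}})$ is a b-frame with these bounds.

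For (2c), I first need the relation between the two b-frame products. From
\begin{eqnarray*}
\langle \tilde{\mathfrak{b}}(h,x),z\rangle=\langle \mathfrak{b}(h,x),S^{-1}z\rangle=\langle h,\langle S^{-1}z / x\rangle\rangle ,
\end{eqnarray*}
which uses the self-adjointness of $S^{-1}$, I get $\langle z / x\rangle\tilde{\ }=\langle S^{-1}z / x\rangle$. Then:
\begin{itemize}
\item The first expansion is $\sum_k S^{-1}\mathfrak{b}(\langle z / x_k\rangle,x_k)=S^{-1}Sz=z$, where $S^{-1}$ passes through the convergent series because it is bounded.
\item The second expansion is $\sum_k\mathfrak{b}(\langle S^{-1}z / x_k\rangle,x_k)=S S^{-1}z=z$.
\end{itemize}
The only real obstacle in the whole proof is the convergence and interchange bookkeeping in (2a); everything else is direct substitution.
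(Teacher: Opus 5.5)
Your proof is correct and follows the paper's argument essentially step for step: the Parseval expansion of $\langle z/x_k\rangle$ in $\{e_j\}$ for (1) and (2a), the canonical dual frame $\{S^{-1}\mathfrak{b}(e_j,x_k)\}$ with bounds $B^{-1},A^{-1}$ for (2b), and the identity $\langle z/x\rangle\tilde{\ }=\langle S^{-1}z/x\rangle$ for (2c). The differences are minor: in (2b) you apply part (1) directly to $\tilde{\mathfrak{b}}$ via $\tilde{\mathfrak{b}}(e_j,x_k)=S^{-1}\mathfrak{b}(e_j,x_k)$ rather than first computing $\sum_k\|\langle z/x_k\rangle\tilde{\ }\|^2$, and in (2a) you supply the unconditional-convergence justification for the interchange of sums, which the paper leaves implicit.
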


\begin{proof} Let $\{e_j\}_{j \in \mathcal{K}}$ be an orthonormal basis  for $\mathcal{H}$ and let $\{x_k\}_{k \in \mathcal{K}} \subset \mathcal{B}$.
\begin{enumerate}
  \item  We have
  \begin{eqnarray*}
 \sum_{k \in \mathcal{K}} \| \langle z/x_k \rangle \|_\mathcal{H}^2 &=&  \sum_{k \in \mathcal{K}}  \sum_{j \in \mathcal{K}} | \langle \langle z/ x_k \rangle, e_j \rangle_\mathcal{H} |^2\\
 & =&  \sum_{k \in \mathcal{K}} \sum_{j \in \mathcal{K}} |  \langle z, \mathfrak{b}(e_j,x_k) \rangle_\mathcal{Z} |^2.
  \end{eqnarray*}
 Therefore, \ref{bframe} holds for $\{x_k\}_{k \in \mathcal{K}}$ if and only if \ref{frame} holds for $\{b(e_j,x_k)\}_{j,k \in \mathcal{K}}$. We conclude that $(\{x_k\}_{k \in \mathcal{K}}, \mathfrak{ b })$ is a b-frame  fro $\mathcal{Z}$  if and only if $\{b(e_j,x_k)\}_{j,k \in \mathcal{K}}$ is a frame for $\mathcal{Z}$ with the same bounds.
 \item Suppose $(\{x_k\}_{k \in \mathcal{K}},\mathfrak{b})$ is a b-frame for $\mathcal{Z}$ and let $S$ be its b-frame operator. By Statement (1), $\{\mathfrak{b}(e_j,x_k)\}_{j,k \in \mathcal{K}}$ is a frame for $\mathcal{Z}$. Let $S'$ be the frame operator of  $\{\mathfrak{b}(e_j,x_k)\}_{j,k \in \mathcal{K}}$.

\begin{enumerate}
  \item  Using \ref{operator} and the continuity of $\mathfrak{b}$, we have
   \begin{eqnarray*}
S(z)=\sum_{k \in \mathcal{K}} \mathfrak{b}(\langle z / x_k \rangle,x_k )&=& \sum_{k \in \mathcal{K}} \mathfrak{b}(\sum_{j \in \mathcal{K}}\langle \langle z / x_k \rangle, e_j \rangle_\mathcal{H} e_j,x_k )\\
&=& \sum_{k \in \mathcal{K}} \sum_{j \in \mathcal{K}} \langle \langle z / x_k \rangle, e_j \rangle_\mathcal{H} \mathfrak{b}(e_j,x_k )\\
&=& \sum_{k \in \mathcal{K}} \sum_{j \in \mathcal{K}} \langle  z , \mathfrak{b}(e_j, x_k)\rangle_\mathcal{Z} \mathfrak{b}(e_j,x_k )= S'(z).
\end{eqnarray*}
Thus, Statement (2.a) is proven.
\item Using \ref{bproduct}, for each $(h,x,z) \in \mathcal{H} \times \mathcal{B} \times \mathcal{Z}$, we have
\begin{eqnarray*}
\langle h, \langle z / x \rangle \tilde{ \ } \rangle_{\mathcal{H}} = \langle \tilde{\mathfrak{b}}(h,x), z \rangle_{\mathcal{Z}} =  \langle S^{-1}\mathfrak{b}(h,x), z \rangle_{\mathcal{Z}},
\end{eqnarray*}
and since $S^{-1}$ is self adjoint,  we  have
\begin{eqnarray*}
\langle h, \langle z / x \rangle \tilde{ \ } \rangle_{\mathcal{H}} =  \langle \mathfrak{b}(h,x), S^{-1}z \rangle_{\mathcal{Z}}= \langle h, \langle S^{-1}z / x \rangle  \rangle_{\mathcal{H}},
\end{eqnarray*}
and so $\langle z / x \rangle \tilde{ \ } = \langle S^{-1}z / x \rangle$. Therefore,
\begin{eqnarray*}
 \sum_{k \in \mathcal{K}} \| \langle z/x_k \rangle\tilde{ \ }\|^2_\mathcal{H} &=& \sum_{k \in \mathcal{K}} \| \langle S^{-1}z / x \rangle\|^2_\mathcal{H}\\
 &=& \sum_{k \in \mathcal{K}} \sum_{j \in \mathcal{K}} |  \langle S^{-1}z, \mathfrak{b}(e_j,x_k) \rangle_\mathcal{Z} |^2\\
  &=& \sum_{k \in \mathcal{K}} \sum_{j \in \mathcal{K}} |  \langle z, S^{-1}\mathfrak{b}(e_j,x_k) \rangle_\mathcal{Z} |^2
  \end{eqnarray*}
 As the canonical dual frame of $\{b(e_j,x_k)\}_{j,k \in \mathcal{K}}$, $\{S^{-1}b(e_j,x_k)\}_{j,k \in \mathcal{K}}$ is a frame with the frame bounds $B^{-1}$ and $A^{-1}$ \cite{Chr}. Using Statement \emph{(1)}, $(\{x_k\}_{k \in \mathcal{K}},\tilde{\mathfrak{b}})$ is then a b-frame with the b-frame bounds $B^{-1}$ and $A^{-1}$.

\item  Using \ref{operator}, for each $z \in \mathcal{Z}$, we have
\begin{eqnarray*}
z&=&\sum_{k \in \mathcal{K}} S^{-1} \mathfrak{b}(\langle z / x_k \rangle,x_k )=\sum_{k \in \mathcal{K}}\tilde{\mathfrak{ b }}(\langle z / x_k \rangle ,x_k )
\end{eqnarray*}
and
\begin{eqnarray*}
z=\sum_{k \in \mathcal{K}} \mathfrak{b}(\langle S^{-1}z / x_k \rangle,x_k )=\sum_{k \in \mathcal{K}}\mathfrak{ b }(\langle z / x_k \rangle \widetilde{ \ },x_k ).
\end{eqnarray*}

\end{enumerate}

\end{enumerate}
\end{proof}

For the remaining of this paper, we define $\mathfrak{b}_0: (h,T) \in \mathcal{H} \times \mathcal{L}(\mathcal{H},\mathcal{Z}) \to \mathfrak{b}_0(h,T)= T(h)$. It is obvious that $\mathfrak{b}_0:\mathcal{H} \times \mathcal{L}(\mathcal{H},\mathcal{Z}) \rightarrow \mathcal{Z}$ is a continuous bilinear mapping.
For each $(h,z,T) \in \mathcal{H} \times \mathcal{Z} \times \mathcal{L}(\mathcal{H},\mathcal{Z})$, we have
\begin{eqnarray*}
\langle T(h), z\rangle_{\mathcal{Z} } =\langle h, T^*(z)\rangle_{\mathcal{H} },
\end{eqnarray*}
and so $\langle z/ T\rangle_{\mathfrak{b_0} } =T^*(z)$. Therefore,  $(\{T_k\}_{k \in \mathcal{K}},\mathfrak{b}_0)$ is a b-frame for $\mathcal{Z}$ if there are two positive constants $A$ and $B$ such that
 \begin{eqnarray}\label{bgframe1}
A||z||^2_{\mathcal{Z}} \leq \sum_{k \in \mathcal{K}} ||  T_k^*(z)  ||^2_{\mathcal{H}}  \leq  B||z||^2 _{\mathcal{Z}}
\end{eqnarray}

The b-frame operator of $(\{T_k\}_{k \in \mathcal{K}},\mathfrak{b}_0)$ is then
\begin{eqnarray*}
S= \sum_{k\in \mathcal{K}} T_k T_k^*
\end{eqnarray*}

\begin{prop}\label{Gramian} [Gramian b-frame] Let $ U:\mathcal{H} \to \mathcal{Z}$ be a bounded surjective operator.  Let $ V:\mathcal{Z} \to \mathcal{Z}$ be a  bounded operator such that $||V|| < 1$.
\begin{enumerate}
  \item $(\{x_k= V^k U\}_{k \geq 0}, \mathfrak{b}_0)$ is a b-frame with the  b-frame bounds
  \begin{eqnarray*}
  A= \inf \{ \| U^*(z) \|^2_\mathcal{H}:  \|z \|_\mathcal{Z} =1 \} \mbox{ and } B= \frac{\|U \|^2}{1-\|V\|^2}
  \end{eqnarray*}
  and the b-frame operator
  \begin{eqnarray}\label{GramianO}
S= \sum_{k \geq 0}  V^k U U^* (V^*)^k
\end{eqnarray}
\item If $U U^*=I_{\mathcal{Z}}$, the identity operator of $\mathcal{Z}$, and $V$ is a normal operator, then the system $(\{x_k= V^k U\}_{k \geq 0}, \mathfrak{b}_0)$ is a b-frame  with the b-frame operator $S=(I_{\mathcal{Z}} -VV^*)^{-1}$ and  b-frame bounds
    \begin{eqnarray*}
A = \frac{1}{\|I_{\mathcal{Z}} -VV^* \| }  \mbox{ and } B=||( I_\mathcal{Z} -VV^* )^{-1}||.
\end{eqnarray*}
  \end{enumerate}

The operator $S$, defined by \ref{GramianO}, is an example of a Gramian operator associated with a discrete-time linear system \cite{Chr}. That is why we call  $(\{x_k= V^k U\}_{k \geq 0}, \mathfrak{b}_0)$ a Gramian b-frame.
\end{prop}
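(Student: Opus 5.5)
We work throughout in the setting of $\mathfrak{b}_0$, so by \eqref{bgframe1} the b-frame condition for $\{x_k\}$ reads
\[
A\|z\|^2 \le \sum_k \|x_k^*(z)\|^2 \le B\|z\|^2 .
\]
Here $x_k^*=U^*(V^*)^k$, so the quantity to estimate is $\sum_{k\ge 0}\|U^*(V^*)^k z\|^2$.

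For part (1), the upper bound is immediate:
\[
\|U^*(V^*)^k z\|^2\le \|U\|^2\|V\|^{2k}\|z\|^2 ,
\]
and the geometric series with ratio $\|V\|^2<1$ sums to $\|U\|^2/(1-\|V\|^2)$. For the lower bound we keep only the $k=0$ term, which gives $\sum_k\|U^*(V^*)^kz\|^2\ge \|U^*z\|^2\ge A\|z\|^2$ with $A=\inf_{\|z\|=1}\|U^*z\|^2$.

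The point needing justification is that $A>0$. Since $U$ is bounded and surjective, the closed range theorem (or open mapping) shows that $U^*$ is injective with closed range and bounded below. Hence $A>0$.

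The b-frame operator is then read off from $S=\sum_k T_kT_k^*$ with $T_k=V^kU$, giving $S=\sum_k V^kUU^*(V^*)^k$. This series converges in norm because $\|V^kUU^*(V^*)^k\|\le \|U\|^2\|V\|^{2k}$.

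For part (2), set $UU^*=I$, so that $S=\sum_k V^k(V^*)^k$. Since $V$ is normal, $V^k(V^*)^k=(VV^*)^k$. Because $\|VV^*\|=\|V\|^2<1$, the Neumann series gives
\[
S=\sum_k (VV^*)^k=(I-VV^*)^{-1}.
\]
For the bounds, $S$ is positive and self-adjoint, so the optimal bounds are $\inf\sigma(S)$ and $\|S\|$. The upper bound is then $B=\|(I-VV^*)^{-1}\|$. For the lower bound,
\[
\langle Sz,z\rangle \ge \frac{\|z\|^2}{\|S^{-1}\|}=\frac{\|z\|^2}{\|I-VV^*\|} ,
\]
which follows from the spectral theorem, or directly from $\|z\|^2=\|S^{-1/2}S^{1/2}z\|^2\le\|S^{-1}\|\langle Sz,z\rangle$. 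This gives $A=1/\|I-VV^*\|$, and the b-frame property holds since $\sum\|x_k^*z\|^2=\langle Sz,z\rangle$.

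The main obstacle is the positivity of $A$ in part (1), which rests on the closed range theorem for surjective $U$. Everything else is a geometric-series computation.
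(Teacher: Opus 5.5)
Your proposal is correct and follows the paper's proof essentially step for step: the geometric-series upper bound, keeping only the $k=0$ term for the lower bound (with $A>0$ because $U$ is surjective), and the Neumann series $\sum_{k\ge 0}(VV^*)^k=(I_{\mathcal{Z}}-VV^*)^{-1}$ in part (2). You also justify two points the paper leaves implicit: that normality gives $V^k(V^*)^k=(VV^*)^k$, and that $\langle Sz,z\rangle\ge \|z\|^2/\|S^{-1}\|$ yields the lower bound $1/\|I_{\mathcal{Z}}-VV^*\|$.
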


\begin{proof}\emph{(1)} The series in \ref{GramianO} is normally convergent, and so $S$ is a bounded operator.

  Since $U$ is surjective, there is $C>0$, such that $ \forall z \in \mathcal{Z},\| U^*(z) \|_\mathcal{H} \geq C \| z \|_\mathcal{Z} $. Therefore, $A = \inf \{ \| U^*(z) \|^2_\mathcal{H}:  \|z \|_\mathcal{Z} =1 \} $ is a positive constant. Therefore,
 $$ \langle Sz,z\rangle_\mathcal{Z} =\sum_{k \geq 0}  \langle V^k U U^* (V^*)^k(z),z\rangle_\mathcal{Z}=\sum_{k \geq 0}  \| U^* (V^*)^k(z) \|^2_\mathcal{H} \geq \| U^*(z) \|_\mathcal{H}^2 \geq A \| z \|_\mathcal{Z}^2.$$
 We also have
$$ \langle Sz,z\rangle_\mathcal{Z} =  \sum_{k \geq 0}  \| U^* (V^*)^k(z) \|^2_\mathcal{H} \leq \frac{\|U \|^2}{1-\|V\|^2} \|z\|_\mathcal{Z}^2=B,$$
and so  $(\{x_k= V^k U\}_{k \geq 0}, \mathfrak{b}_0)$ is a b-frame with the  b-frame bounds $A$ and $B$.

\emph{(2)} If $U U^*=I_{\mathcal{Z}}$ and $V$ is a normal operator, then
  \begin{eqnarray*}
S= \sum_{k \geq 0}   (VV^*)^{k}= (I_{\mathcal{Z}} -VV^*)^{-1},
\end{eqnarray*}
and so
\begin{eqnarray*}
A = \frac{1}{\|I_{\mathcal{Z}} -VV^* \| }  \mbox{ and } B=||( I_\mathcal{Z} -VV^* )^{-1}||.
\end{eqnarray*}
are  b-frame bounds for $(\{x_k= V^k U\}_{k \geq 0}, \mathfrak{b}_0)$.
\end{proof}

\begin{example}\label{GramianE}
We consider the restriction operator $U: h \in L^2(\mathbb{R}) \to Uh \in L^2([0,1])$, i.e.,
$ Uh(t) = h(t)$ for almost all $ t \in [0,1]$. We have then $\|U\|=1$ and $UU^*=I_{L^2([0,1])}$. Indeed, $U^*$ is the extension operator, i.e.,  if $z \in L^2([0,1])$, then $supp(U^*z) \subset [0,1]$ and, for almost all $t \in [0,1], U^*z(t)= z(t)$, and so $\|U^*z\|_{L^2(\mathbb{R})} =\|z \|_{L^2([0,1])}$.

Let $\varphi \in L^\infty([0,1])$ such that $\| \varphi \|_{L^\infty([0,1])} < 1$. The multiplication operator $ M_{\varphi}z= \varphi z,   z \in L^2([0,1]) $ is normal and $ \|M_{\varphi}\|= \| \varphi \|_{L^\infty([0,1])} < 1$. We also have $ M_{\varphi}^{k}z= \varphi^k z,  \forall z \in L^2([0,1]) $.

Using the second Statement of Proposition \ref{Gramian}, $(\{M_{\varphi^k}U\}_{k \geq 0}, \mathfrak{b}_0)$ is a Gramian b-frame with the frame bounds
\begin{eqnarray*}
A = \frac{1}{\|1 -|\varphi|^2 \|_{L^\infty([0,1])} }  \mbox{ and } B=\left|\left|\frac{1}{1 -|\varphi|^2}\right|\right|_{L^\infty([0,1])}.
\end{eqnarray*}
\end{example}

%%%%%%%%%%%%%%%%%%%%%%%%%%%%%%%%%%%%%%%%
\section{B-Riesz bases }
\label{sec:3}

\begin{defn} Let $\{x_k\}_{k \in \mathcal{K}} \subset \mathcal{B}$.
\begin{enumerate}
  \item $\{x_k\}_{k \in \mathcal{K}}$ is a b-basis for $\mathcal{Z}$ if each $z \in \mathcal{Z}$ has a unique representation in the form
  $$z= \sum_{k \in \mathcal{K}}  \mathfrak{b}(h_k,x_k). $$
  \item $\{x_k\}_{k \in \mathcal{K}}$  is b-orthonormal for $\mathcal{Z}$ if
  \begin{eqnarray*}
  \langle \mathfrak{b}(h,x_k) / x_j \rangle=\delta_{kj}h, \quad \forall j,k \in \mathcal{K} \mbox{ and } \forall h \in \mathcal{H}.
  \end{eqnarray*}
  \item $\{x_k\}_{k \in \mathcal{K}}$  is a b-orthonormal basis for $\mathcal{Z}$ if it is a b-basis and b-orthonormal.
  \item $\{x_k\}_{k \in \mathcal{K}}$ is a b-Riesz basis for $\mathcal{Z}$ if it is a b-frame and a b-basis for $\mathcal{Z}$.
\end{enumerate}
\end{defn}

The following lemma is proven in \cite{Ism}.
\begin{lem}\label{RieszL1} The system $\{x_k\}_{k \in \mathcal{K}}$ is a b-frame for $\mathcal{Z}$ if and only if there is a b-orthonormal basis $\{E_k\}_{k \in \mathcal{K}}$ for  $\mathcal{Z}$ and a bounded surjective operator $ U: \mathcal{Z} \rightarrow \mathcal{Z}$ such that $$Ub(h,E_k)=b(h,x_k), \forall h \in \mathcal{H}.$$
\end{lem}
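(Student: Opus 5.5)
The plan is to reduce everything to ordinary frame theory through Proposition~\ref{bdual}(1), using the orthonormal basis $\{e_j\}_{j\in\mathcal{K}}$ of $\mathcal{H}$. The first step is to understand what a b-orthonormal basis $\{E_k\}_{k \in \mathcal{K}}$ gives in $\mathcal{Z}$. Put $T_k:=\mathfrak{b}(\cdot,E_k)\in\mathcal{L}(\mathcal{H},\mathcal{Z})$. Using \eqref{bproduct} together with b-orthonormality,
\begin{eqnarray*}
\langle \mathfrak{b}(h',E_j),\mathfrak{b}(h,E_k)\rangle_{\mathcal{Z}}=\langle h',\langle \mathfrak{b}(h,E_k)/E_j\rangle\rangle_{\mathcal{H}}=\delta_{kj}\langle h',h\rangle_{\mathcal{H}} .
\end{eqnarray*}
So each $T_k$ is an isometry, and the ranges $T_k(\mathcal{H})$ are pairwise orthogonal. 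The b-basis property says every $z$ equals $\sum_k T_k h_k$. Hence $\mathcal{Z}$ is the orthogonal direct sum $\bigoplus_k T_k(\mathcal{H})$, and $\{\mathfrak{b}(e_j,E_k)\}_{j,k\in\mathcal{K}}$ is an orthonormal basis of $\mathcal{Z}$. This is the key structural fact.

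For the direction ($\Leftarrow$), suppose $U\mathfrak{b}(h,E_k)=\mathfrak{b}(h,x_k)$ with $U$ bounded and surjective. Then $\{\mathfrak{b}(e_j,x_k)\}_{j,k}=\{U\mathfrak{b}(e_j,E_k)\}_{j,k}$ is the image of an orthonormal basis under a bounded surjective operator. By the standard characterization in \cite{Chr}, it is a frame for $\mathcal{Z}$, with bounds $\|U^{\dagger}\|^{-2}$ and $\|U\|^2$. Proposition~\ref{bdual}(1) then shows that $\{x_k\}_{k \in \mathcal{K}}$ is a b-frame.

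For the direction ($\Rightarrow$), Proposition~\ref{bdual}(1) shows that $\{\mathfrak{b}(e_j,x_k)\}_{j,k}$ is a frame for $\mathcal{Z}$. Given a b-orthonormal basis $\{E_k\}_{k \in \mathcal{K}}$, define $U$ on the orthonormal basis by $U\mathfrak{b}(e_j,E_k)=\mathfrak{b}(e_j,x_k)$. Equivalently, $U$ is the synthesis operator of the frame composed with the unitary map $\mathcal{Z}\to\ell^2(\mathcal{K}\times\mathcal{K})$ given by the coefficients in the basis $\{\mathfrak{b}(e_j,E_k)\}$. The synthesis operator of a frame is bounded and surjective, so $U$ is bounded and surjective. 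Finally, for $h=\sum_j\langle h,e_j\rangle e_j$, linearity and continuity of $\mathfrak{b}$ in its first variable give
\begin{eqnarray*}
U\mathfrak{b}(h,E_k)=\sum_j\langle h,e_j\rangle U\mathfrak{b}(e_j,E_k)=\sum_j\langle h,e_j\rangle\mathfrak{b}(e_j,x_k)=\mathfrak{b}(h,x_k).
\end{eqnarray*}

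The main obstacle is in ($\Rightarrow$): the existence of \emph{some} b-orthonormal basis $\{E_k\}\subset\mathcal{B}$ for the given $\mathfrak{b}$. This cannot be produced from the frame condition alone, since it depends on how rich $\mathfrak{b}$ and $\mathcal{B}$ are. For instance, $\mathfrak{b}_0$ with $\mathcal{B}=\mathcal{L}(\mathcal{H},\mathcal{Z})$ admits one whenever $\mathcal{Z}\cong\bigoplus_{k\in\mathcal{K}}\mathcal{H}$. As I read the statement, it is asserted in the setting of \cite{Ism}, where such a basis is available; the proof above then shows that any b-orthonormal basis works.
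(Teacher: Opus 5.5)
The paper gives no proof of this lemma; it only cites \cite{Ism}, so your argument has to stand on its own, and it does. The identity $\langle \mathfrak{b}(h',E_j),\mathfrak{b}(h,E_k)\rangle_{\mathcal{Z}}=\delta_{kj}\langle h',h\rangle_{\mathcal{H}}$, together with the completeness supplied by the b-basis property, makes $\{\mathfrak{b}(e_j,E_k)\}_{j,k}$ an orthonormal basis of $\mathcal{Z}$. Both directions then reduce to the classical description of frames as images of an orthonormal basis under bounded surjections, carried over through Proposition~\ref{bdual}(1).

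Compared with a bare citation, your route stays inside the frame-theoretic reduction the paper uses elsewhere. It also gives explicit bounds $\|U^{\dagger}\|^{-2}$ and $\|U\|^{2}$, and shows that \emph{any} b-orthonormal basis can serve as $\{E_k\}$. Your direct derivation of completeness from the b-basis property fills a step the paper's proof of Lemma~\ref{RieszL2} skips: that proof applies Proposition~\ref{bdual} to a b-orthonormal basis without first knowing it is a b-frame.

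Your caveat about existence is not a weakness of your proof but a genuine defect of the statement. The ``only if'' direction is false in general unless one assumes that a b-orthonormal basis exists. Your own computation shows that each $\mathfrak{b}(\cdot,E_k)$ must be an isometry of $\mathcal{H}$ into $\mathcal{Z}$, and the b-frame condition does not supply one.

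\emph{A finite-dimensional counterexample.} Take $\mathcal{H}=\mathbb{C}^2$, $\mathcal{Z}=\mathbb{C}$, $\mathcal{B}=\mathcal{L}(\mathcal{H},\mathcal{Z})$, $\mathfrak{b}=\mathfrak{b}_0$, $x_{k_0}(h)=h_1$ and all other $x_k=0$. Then $\langle z/x_{k_0}\rangle=x_{k_0}^*z=(z,0)$, so this is a b-frame with bounds $1,1$. Yet no $E\in\mathcal{B}$ satisfies $E^*E=I_{\mathbb{C}^2}$.

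\emph{An infinite-dimensional counterexample.} Let $\mathcal{B}$ be the compact operators $\mathcal{H}\to\mathcal{Z}$, and let $x_k=\langle\cdot,e_1\rangle_{\mathcal{H}}f_k$ with $\{f_k\}$ an orthonormal basis of $\mathcal{Z}$. Then $\langle z/x_k\rangle=\langle z,f_k\rangle e_1$, so this is a Parseval b-frame. But no compact operator on an infinite-dimensional space is an isometry.

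So the lemma has to be read as: ``if $\mathcal{Z}$ admits a b-orthonormal basis, then for every such basis the equivalence holds.'' That is exactly what you proved. The same hypothesis is tacitly needed in Theorem~\ref{Riesz}, whose Statement \emph{(2)} asserts the existence of such a basis.
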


\begin{lem}\label{RieszL2}Let $\{e_j\}_{j \in \mathcal{K}}$ be an orthonormal basis  for $\mathcal{H}$.  A system $\{x_k\}_{k \in \mathcal{K}}$ is  a b-orthonormal basis for  $\mathcal{Z}$ if and only if $\{ \mathfrak{b}(e_j,x_k)\}_{j,k \in \mathcal{K}}$ is an orthonormal basis for  $\mathcal{Z}$.
\end{lem}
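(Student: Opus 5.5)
The plan is to translate both conditions — b-orthonormality and the b-basis property — into statements about the doubly indexed family $\{\mathfrak{b}(e_j,x_k)\}_{j,k\in\mathcal{K}}$. The tool throughout is the defining identity (\ref{bproduct}): for $h\in\mathcal{H}$, $x\in\mathcal{B}$ and $z\in\mathcal{Z}$ we have $\langle \mathfrak{b}(h,x),z\rangle_{\mathcal{Z}}=\langle h,\langle z/x\rangle\rangle_{\mathcal{H}}$.

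\emph{Orthonormality.} Suppose $\{x_k\}$ is b-orthonormal. Then
\[
\langle \mathfrak{b}(e_i,x_k),\mathfrak{b}(e_j,x_l)\rangle_{\mathcal{Z}}=\langle e_j,\langle \mathfrak{b}(e_i,x_k)/x_l\rangle\rangle_{\mathcal{H}}^{-}=\overline{\langle e_j,\delta_{kl}e_i\rangle}=\delta_{kl}\delta_{ij},
\]
so the family $\{\mathfrak{b}(e_j,x_k)\}$ is orthonormal. Conversely, suppose this family is orthonormal. Then for every $i,j,k,l$,
\[
\langle e_i,\langle \mathfrak{b}(e_j,x_k)/x_l\rangle\rangle_{\mathcal{H}}=\langle \mathfrak{b}(e_i,x_l),\mathfrak{b}(e_j,x_k)\rangle_{\mathcal{Z}}=\delta_{ij}\delta_{kl}.
\]
Hence $\langle \mathfrak{b}(e_j,x_k)/x_l\rangle=\delta_{kl}e_j$. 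The map $h\mapsto \langle \mathfrak{b}(h,x_k)/x_l\rangle$ is linear and bounded, since $\|\langle z/x\rangle\|\le M\|x\|\|z\|$. Extending from the basis vectors $e_j$ by linearity and continuity gives $\langle \mathfrak{b}(h,x_k)/x_l\rangle=\delta_{kl}h$ for all $h$.

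\emph{Completeness and basis property.} Assume the family $\{\mathfrak{b}(e_j,x_k)\}$ is an orthonormal basis and fix $z\in\mathcal{Z}$. Expanding $z$ in this basis and grouping the terms by $k$ gives
\[
z=\sum_k \mathfrak{b}(h_k,x_k),\qquad h_k=\sum_j\langle z,\mathfrak{b}(e_j,x_k)\rangle e_j=\langle z/x_k\rangle .
\]
The vector $h_k$ lies in $\mathcal{H}$ because $\sum_{k,j}|\langle z,\mathfrak{b}(e_j,x_k)\rangle|^2=\|z\|^2<\infty$. The regrouping of the series, and the step of pulling the inner sum through $\mathfrak{b}$, are both justified by the continuity of $\mathfrak{b}$, exactly as in the proof of Proposition \ref{bdual}(2a). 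For uniqueness, suppose $\sum_k\mathfrak{b}(h_k,x_k)=0$. Take the inner product with $\mathfrak{b}(e_i,x_l)$ and use b-orthonormality, i.e.\ $\langle \mathfrak{b}(h_k,x_k),\mathfrak{b}(e_i,x_l)\rangle=\langle e_i,\langle\mathfrak{b}(h_k,x_k)/x_l\rangle\rangle^{-}=\delta_{kl}\langle h_k,e_i\rangle$, to obtain $\langle h_l,e_i\rangle=0$ for all $i$, hence $h_l=0$.

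Conversely, assume $\{x_k\}$ is a b-orthonormal b-basis. The orthonormality of $\{\mathfrak{b}(e_j,x_k)\}$ was shown above. For completeness, take $z\perp \mathfrak{b}(e_j,x_k)$ for all $j,k$. Then $\langle z/x_k\rangle=0$ for all $k$. Writing $z=\sum_k\mathfrak{b}(h_k,x_k)$ with the b-basis property, we get
\[
\|z\|^2=\sum_k\langle \mathfrak{b}(h_k,x_k),z\rangle=\sum_k\langle h_k,\langle z/x_k\rangle\rangle=0 .
\]

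The step I expect to be the main obstacle is the bookkeeping of convergence when interchanging sums and $\mathfrak{b}$. The expansion $\sum_k\mathfrak{b}(h_k,x_k)$ is only known to converge in some order, so each interchange must be justified. I would handle this by using unconditional convergence of orthonormal expansions, together with continuity of $\mathfrak{b}$ in its first argument for each fixed $x_k$.
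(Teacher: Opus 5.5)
Your proof is correct. On the orthonormality side it coincides with the paper's. The paper also reads $\langle \mathfrak{b}(e_j,x_k),\mathfrak{b}(e_{j'},x_{k'})\rangle_{\mathcal{Z}}=\delta_{kk'}\delta_{jj'}$ off (\ref{bproduct}). For the converse it expands $\langle \mathfrak{b}(h,x_k)/x_j\rangle$ in $\{e_{j'}\}$ to get $\delta_{kj}h$, which is your linearity-and-continuity extension written out.

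The routes diverge on completeness and the basis property, and there your version is the more careful one.

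\emph{Forward direction.} The paper obtains completeness of $\{\mathfrak{b}(e_j,x_k)\}$ by citing Proposition \ref{bdual}(1). That presupposes $\{x_k\}$ is a b-frame, which is not part of the definition of a b-orthonormal basis. Your direct argument supplies exactly this: $z\perp\mathfrak{b}(e_j,x_k)$ for all $j,k$ forces $\langle z/x_k\rangle=0$, and pairing $z$ with its b-expansion gives $\|z\|^2=0$. It uses only the existence half of the b-basis property.

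\emph{Reverse direction.} The paper uses Proposition \ref{bdual}(1) to get a b-frame, then asserts the b-basis property without verifying uniqueness of $z=\sum_k\mathfrak{b}(h_k,x_k)$. You fill this in. Existence comes from regrouping the orthonormal expansion, with $h_k=\langle z/x_k\rangle$. Uniqueness comes from pairing against $\mathfrak{b}(e_i,x_l)$ and using b-orthonormality.

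The paper's route is shorter and records that $\{x_k\}$ is a b-frame with bounds $1$ and $1$. That fact also falls out of yours, since $\sum_k\|\langle z/x_k\rangle\|_{\mathcal{H}}^2=\sum_{j,k}|\langle z,\mathfrak{b}(e_j,x_k)\rangle_{\mathcal{Z}}|^2=\|z\|_{\mathcal{Z}}^2$.

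The convergence bookkeeping you flag is lighter than you fear. Wherever you use a b-expansion, you only need continuity of the inner product, which holds for whatever order the series converges in. The regrouping step is just $z=\sum_kP_kz$, where $P_k$ is the orthogonal projection onto the span of $\{\mathfrak{b}(e_j,x_k)\}_j$; these spans are mutually orthogonal.
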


\begin{proof} Suppose $\{x_k\}_{k \in \mathcal{K}}$ is  a b-orthonormal basis for  $\mathcal{Z}$. We have
\begin{eqnarray*}
\langle \mathfrak{b}(e_j,x_k),\mathfrak{b}(e_{j'},x_{k'}) \rangle_\mathcal{Z} &=& \langle e_j, \langle \mathfrak{b}(e_{j'},x_{k'}) /x_k  \rangle \rangle_\mathcal{H}\\ &=& \langle e_j,   \delta_{kk'}e_{j'} \rangle_\mathcal{H}= \delta_{kk'}\delta_{jj'},
\end{eqnarray*}
and by Proposition \ref{bdual}, $\{ \mathfrak{b}(e_j,x_k)\}_{j,k \in \mathcal{K}}$ is a frame for  $\mathcal{Z}$. We conclude that $\{ \mathfrak{b}(e_j,x_k)\}_{j,k \in \mathcal{K}}$ is an orthonormal basis for  $\mathcal{Z}$.

Now suppose $\{ \mathfrak{b}(e_j,x_k)\}_{j,k \in \mathcal{K}}$ is an orthonormal basis for  $\mathcal{Z}$. We have
\begin{eqnarray*}
\langle \mathfrak{b}(h,x_k)/x_j \rangle &=& \sum_{j' \in \mathcal{K}}\langle\langle \mathfrak{b}(h,x_k)/x_j) \rangle, e_{j'}  \rangle_\mathcal{H}e_{j'}\\
&=& \sum_{j' \in \mathcal{K}}\langle \mathfrak{b}(h,x_k), \mathfrak{b}(e_{j'},x_j) \rangle_\mathcal{Z} e_{j'}\\
&=& \sum_{j' \in \mathcal{K}}\sum_{k' \in \mathcal{K}} \langle  \mathfrak{b}(e_{k'},x_k), \mathfrak{b}(e_{j'},x_j) \rangle_\mathcal{Z} \langle  h, e_{k'} \rangle_\mathcal{H} e_{j'}\\
&=& \sum_{j' \in \mathcal{K}} \delta_{kj} \langle  h, e_{j'} \rangle_\mathcal{H} e_{j'}=\delta_{kj} h,\\
\end{eqnarray*}
and by Proposition \ref{bdual}, $(\{x_k\}_{k \in \mathcal{K}}$ is a b-frame for  $\mathcal{Z}$. We conclude that $(\{x_k\}_{k \in \mathcal{K}}$ is a b-orthonormal basis for  $\mathcal{Z}$.
\end{proof}

\begin{thm}\label{Riesz} Let $\{x_k\}_{k \in \mathcal{K}} \subset \mathcal{B}$ and let $\{e_j\}_{j \in \mathcal{K}}$ be an orthonormal basis  of $\mathcal{H}$.  The following statements are equivalent.
\begin{enumerate}
  \item $\{x_k\}_{k \in \mathcal{K}}$ is a b-Riesz basis for $\mathcal{Z}$.
  \item There is a b-orthonormal basis $\{E_k\}_{k \in \mathcal{K}}$ for  $\mathcal{Z}$ and a bounded bijective operator $ U: \mathcal{Z} \rightarrow \mathcal{Z}$ such that
      \begin{eqnarray}\label{Riesz1}
      U\mathfrak{b}(h,E_k)=\mathfrak{b}(h,x_k), \forall h \in \mathcal{H} \mbox{ and } \forall k \in \mathcal{K}
      \end{eqnarray}
  \item $\{b(e_j,x_k)\}_{j,k \in \mathcal{K}}$ is a Riesz basis for $\mathcal{Z}$.
\end{enumerate}
\end{thm}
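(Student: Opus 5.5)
We plan to prove the cycle $(1)\Rightarrow(3)\Rightarrow(2)\Rightarrow(1)$. Throughout, the main tool is the correspondence between the b-system $\{x_k\}$ and the ordinary system $\{\mathfrak{b}(e_j,x_k)\}_{j,k}$ from Proposition \ref{bdual} and Lemma \ref{RieszL2}. The key observation is that, by continuity and bilinearity of $\mathfrak{b}$, a series $\sum_k \mathfrak{b}(h_k,x_k)$ can be rewritten as $\sum_k\sum_j c_{jk}\,\mathfrak{b}(e_j,x_k)$ with $c_{jk}=\langle h_k,e_j\rangle_\mathcal{H}$. Conversely, any $\sum_{j,k} c_{jk}\mathfrak{b}(e_j,x_k)$ whose coefficients satisfy $\sum_j|c_{jk}|^2<\infty$ for each $k$ has the form $\sum_k \mathfrak{b}(h_k,x_k)$ with $h_k=\sum_j c_{jk}e_j$.

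\textbf{$(1)\Rightarrow(3)$.} By Proposition \ref{bdual}(1), $\{\mathfrak{b}(e_j,x_k)\}$ is a frame. We will show it is $\omega$-independent, since a frame that is $\omega$-independent is a Riesz basis (a standard fact in \cite{Chr}). Suppose $\sum_{j,k}c_{jk}\mathfrak{b}(e_j,x_k)=0$ with $\{c_{jk}\}\in\ell^2$. Setting $h_k=\sum_j c_{jk}e_j$ gives $\sum_k\mathfrak{b}(h_k,x_k)=0$. Uniqueness of the representation of $0$ in the b-basis then forces $h_k=0$ for every $k$, hence all $c_{jk}=0$.

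There is one subtlety here: the frame expansion only provides $\ell^2$ coefficients, and we must check that the double series can be grouped by $k$. For this we will use the frame expansion with the canonical dual coefficients, which by Proposition \ref{bdual}(2c) are exactly $h_k=\langle z/x_k\rangle\tilde{\ }$. This grouping step is the main obstacle. If it proves delicate, we will argue through the synthesis operator instead: it is surjective as a frame's synthesis operator, and b-basis uniqueness gives it trivial kernel, so it is bijective and $\{\mathfrak{b}(e_j,x_k)\}$ is a Riesz basis.

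\textbf{$(3)\Rightarrow(2)$.} Let $\{\epsilon_{jk}\}$ be any orthonormal basis of $\mathcal{Z}$ indexed by $\mathcal{K}\times\mathcal{K}$; one exists since $\mathcal{Z}$ is separable and infinite-dimensional, as the existence of a Riesz basis so indexed shows. Since $\{\mathfrak{b}(e_j,x_k)\}$ is a Riesz basis, there is a bounded bijective $U$ with $U\epsilon_{jk}=\mathfrak{b}(e_j,x_k)$. We need b-orthonormal basis elements $E_k\in\mathcal{B}$ with $\mathfrak{b}(e_j,E_k)=\epsilon_{jk}$. The natural candidate is $E_k=U^{-1}$ applied to $x_k$, but $\mathcal{B}$ is abstract, so we cannot act on $x_k$ directly. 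The plan is therefore to take the orthonormal basis $\epsilon_{jk}:=U^{-1}\mathfrak{b}(e_j,x_k)$ after replacing $U$ by the orthogonalizing operator $(S')^{1/2}$ from the Riesz basis; that is, we set $E_k$ so that $\mathfrak{b}(e_j,E_k)=S'^{-1/2}\mathfrak{b}(e_j,x_k)$, which is an orthonormal basis. To realize this inside $\mathcal{B}$ we will use Lemma \ref{RieszL1}, which already supplies a b-orthonormal basis $\{E_k\}$ and a surjective $U$ with $U\mathfrak{b}(h,E_k)=\mathfrak{b}(h,x_k)$, since a Riesz basis is a frame and hence $\{x_k\}$ is a b-frame by Proposition \ref{bdual}. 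By Lemma \ref{RieszL2}, $U$ maps the orthonormal basis $\{\mathfrak{b}(e_j,E_k)\}$ onto the Riesz basis $\{\mathfrak{b}(e_j,x_k)\}$. Since the image of an orthonormal basis is $\omega$-independent exactly when the operator is injective, $U$ is injective and hence bijective.

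\textbf{$(2)\Rightarrow(1)$.} Using Lemma \ref{RieszL2}, $\{\mathfrak{b}(e_j,E_k)\}$ is an orthonormal basis, so its image $\{\mathfrak{b}(e_j,x_k)\}$ under the bijection $U$ is a Riesz basis, in particular a frame. By Proposition \ref{bdual}(1), $\{x_k\}$ is then a b-frame. For the b-basis property we will show existence and uniqueness of the representation. Existence comes from $z=U w$ with $w=\sum_k\mathfrak{b}(\langle w/E_k\rangle,E_k)$, which gives $z=\sum_k\mathfrak{b}(h_k,x_k)$. For uniqueness, suppose $\sum_k\mathfrak{b}(h_k,x_k)=0$. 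Applying $U^{-1}$ yields $\sum_k\mathfrak{b}(h_k,E_k)=0$, and pairing with $\mathfrak{b}(h,E_{k'})$ via b-orthonormality gives $\langle h,h_{k'}\rangle=0$ for all $h$, so $h_{k'}=0$.
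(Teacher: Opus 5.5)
Your proof is correct, but it is organized differently from the paper's. The paper first proves $(1)\Leftrightarrow(2)$: it takes the pair $(\{E_k\},U)$ from Lemma \ref{RieszL1} and shows that $U$ is injective exactly when representations $\sum_k\mathfrak{b}(h_k,x_k)$ are unique. It then gets $(1)\Rightarrow(3)$ by pushing the orthonormal basis $\{\mathfrak{b}(e_j,E_k)\}$ forward under the bijection $U$. It gets $(3)\Rightarrow(1)$ by invoking Lemma \ref{RieszL1} again and noting that $U$ must be injective. Your $(3)\Rightarrow(2)$ is exactly that last step.

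The genuine differences are in the other two implications. Your $(1)\Rightarrow(3)$ never uses Lemma \ref{RieszL1}. It needs only Proposition \ref{bdual}(1) and the standard fact that an $\omega$-independent frame is a Riesz basis, so this implication does not depend on the existence of a b-orthonormal basis inside the abstract space $\mathcal{B}$. Your $(2)\Rightarrow(1)$ applies the bounded inverse $U^{-1}$ to $\sum_k\mathfrak{b}(h_k,x_k)=0$ and pairs with $\mathfrak{b}(h,E_{k'})$. This works for every convergent representation and also gives existence explicitly. The paper's contrapositive argument, by contrast, tacitly assumes that $\sum_k\mathfrak{b}(h_k,E_k)$ converges. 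What the paper's route buys is economy: the b-basis property is encoded once as injectivity of a single operator, and both directions are read off from that.

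Two remarks on presentation. First, the regrouping you call the main obstacle in $(1)\Rightarrow(3)$ is routine. Write $c\in\ell^2(\mathcal{K}\times\mathcal{K})$ as the $\ell^2$-convergent sum $\sum_k c^{(k)}$ of its columns. Boundedness of the synthesis operator $T$ of the frame $\{\mathfrak{b}(e_j,x_k)\}$, together with continuity of $\mathfrak{b}(\cdot,x_k)$, gives $Tc=\sum_k Tc^{(k)}=\sum_k\mathfrak{b}(h_k,x_k)$ with $\sum_k\|h_k\|^2=\|c\|^2$. The appeal to canonical dual coefficients is beside the point, since a kernel element is an arbitrary $\ell^2$ sequence. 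Second, the detour in $(3)\Rightarrow(2)$ through an arbitrary orthonormal basis $\{\epsilon_{jk}\}$ and $(S')^{-1/2}$ should simply be deleted; Lemma \ref{RieszL1} does all the work there.
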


\begin{proof}
(i) Suppose $\{x_k\}_{k \in \mathcal{K}}$  is a b-frame for $\mathcal{Z}$. To prove the equivalence of Statements \emph{(1)} and \emph{(2)}, we just need to prove that the operator $U$ in Lemma \ref{RieszL1} is one-to-one if and only if $\{x_k\}_{k \in \mathcal{K}}$  is a b-basis for $\mathcal{Z}$.

Let $z \in \mathcal{Z}$. Since $\{E_k\}_{k \in \mathcal{K}}$ is a b-orthonormal basis for $\mathcal{Z}$, $z$ has a unique representation in the form
\begin{eqnarray}\label{Riesz2}
z= \sum_{k \in \mathcal{K}}b(h_k,E_k).
\end{eqnarray}
Therefore,
\begin{eqnarray}\label{Riesz3}
Uz= \sum_{k \in \mathcal{K}} U b(h_k,E_k)= \sum_{k \in \mathcal{K}}  b(h_k,x_k).
\end{eqnarray}

Suppose $\{x_k\}_{k \in \mathcal{K}}$  is a b-basis for $\mathcal{Z}$ and $Uz=0$. In this case, \ref{Riesz3} implies   $h_k=0, \quad \forall k>0$, and so  $z=0$ by \ref{Riesz2}. Therefore, $U$ is one-to-one.

Conversely, suppose $\{x_k\}_{k \in \mathcal{K}}$  is not a b-basis for $\mathcal{Z}$. Therefore, we have
\begin{eqnarray}\label{Riesz4}
0= \sum_{k \in \mathcal{K}} b(h_k,x_k),
\end{eqnarray}
in which $h_k \neq 0$ for some indices $k \in \mathcal{K}$. Using the fact that $\{E_k\}_{k \in \mathcal{K}}$ is a b-orthonormal basis, $z$ defined by \ref{Riesz2} is not equal to zero, while $Uz=0$ by \ref{Riesz4}. Hence, $U$ is not one-to-one.

(ii) Suppose $\{x_k\}_{k \in \mathcal{K}}$ is a b-Riesz basis for $\mathcal{Z}$. Using the equivalence of Statements \emph{(1)} and \emph{(2)}, there is a b-orthonormal basis $\{E_k\}_{k \in \mathcal{K}}$ for  $\mathcal{Z}$ and a bounded bijective operator $ U: \mathcal{Z} \rightarrow \mathcal{Z}$ such that \ref{Riesz1} holds. Therefore,
\begin{eqnarray*}
      U\mathfrak{b}(e_j,E_k)=\mathfrak{b}(e_j,x_k), \quad \forall j,k \in \mathcal{K},
\end{eqnarray*}
which implies that $\{b(e_j,x_k)\}_{j,k \in \mathcal{K}}$ is a Riesz basis for $\mathcal{Z}$ since, by Lemma \ref{RieszL2}, $\{b(e_j,E_k)\}_{j,k \in \mathcal{K}}$ is an orthonormal basis for $\mathcal{Z}$.

Conversely, suppose $\{b(e_j,x_k)\}_{j,k \in \mathcal{K}}$ is a Riesz basis for $\mathcal{Z}$. By Proposition \ref{bdual}, $\{x_k\}_{k>0}$ is a b-frame. Therefore, by Lemma \ref{RieszL1}, there is a b-orthonormal basis $\{E_k\}_{k \in \mathcal{K}}$ for  $\mathcal{Z}$ and a bounded surjective operator $ U: \mathcal{Z} \rightarrow \mathcal{Z}$ such that \ref{Riesz1} holds.  Therefore,
\begin{eqnarray*}
      U\mathfrak{b}(e_j,E_k)=\mathfrak{b}(e_j,x_k), \forall j,k \in \mathcal{K}.
\end{eqnarray*}
The last equation and the fact that $\{b(e_j,x_k)\}_{j,k \in \mathcal{K}}$ is a Riesz basis imply that $U$ must be invertible. Using the equivalence of Statement \emph{(1)} and \emph{(2)},
$\{x_k\}_{k \in \mathcal{K}}$ is a b-Riesz basis. Hence, we finished the proof of the equivalence of Statements \emph{(2)} and \emph{(3)}.

\end{proof}

\begin{example}\label{RieszE} For $s \in \mathbb{R}$ and $f$ a measurable function, we define $T_sf(t)=f(t-s)$.
\begin{enumerate}
  \item Let $\alpha >0$, $\mathcal{H}=L^2([0,\alpha]), \mathcal{Z}= L^2(\mathbb{R}), \mathcal{B}= \mathcal{L} (\mathcal{H},\mathcal{Z})$, and $\mathfrak{b}(h, x)=x(h)$. Let  $\varphi \in L^\infty(\mathbb{R})$  such that $ supp (\varphi) \subset [0,\alpha]$ and
$$\forall t \in [0,\alpha],  \quad 0<a \leq | \varphi(t) | \leq b$$
For $  k \in \mathbb{Z}$ and $h \in \mathcal{H}$, we define
\[  x_k(h)(t)= \left\lbrace
  \begin{array}{c l}
    \varphi(t-k\alpha) h(t-k\alpha), & t \in [k\alpha , (k+1)\alpha] \\
   0, &  t \notin [k\alpha , (k+1)\alpha]
  \end{array}
\right. \]
We then have $\langle z/x_k\rangle = \overline{ \varphi} T_{-k\alpha} z$. Therefore,
\begin{eqnarray*}
\sum_{k \in \mathbb{Z}} \| \langle z/x_k\rangle \|_\mathcal{H}^2= \sum_{k \in \mathbb{Z}} \int |\overline{ \varphi}(t) z(t+k\alpha)|^2  dt= \sum_{k \in \mathbb{Z}} \int_{k\alpha} ^ {(k+1)\alpha }|\overline{ \varphi}(t-k\alpha) z(t)|^2 dt,
\end{eqnarray*}
and so
\begin{eqnarray*}
a^2 \| z \|_\mathcal{Z}^2= a^2 \sum_{k \in \mathbb{Z}} \int_{k\alpha } ^ {(k+1)\alpha }| z(t)|^2 dt \leq \sum_{k \in \mathbb{Z}} \| \langle z/x_k\rangle \|_\mathcal{H}^2 \leq b^2 \| z \|_\mathcal{Z}^2= b^2 \sum_{k \in \mathbb{Z}} \int_{k\alpha } ^ {(k+1)\alpha }| z(t)|^2 dt.
\end{eqnarray*}
We conclude that $\{ x_k \}_{ k \in \mathbb{Z}}$ is a b-frame with the bounds $A=a^2$ and $B=b^2$.

Now, let $$ \Phi(t)= \sum_{k \in \mathbb{Z}} | \varphi(t-k\alpha)|^2.$$
Hence, $\Phi$ is periodic with the period $ \alpha $, bounded with $A$ and $B$, and
\begin{eqnarray*}
\forall t \in [0, \alpha], \quad \Phi(t)= | \varphi (t)|^2.
\end{eqnarray*}

The operator
\begin{eqnarray*}
U : L^2(\mathbb{R})  & \to &  L^2(\mathbb{R})\\
z & \to & U(z)= \frac{z}{\Phi}
\end{eqnarray*}
is positive and invertible.  For $j, k \in \mathbb{Z}$, we have
\begin{eqnarray*}
 \langle U \mathfrak{b}(h,x_k)/x_j \rangle (t)&=& \overline{ \varphi} T_{-j\alpha}( \frac{T_{k\alpha}(\varphi h)}{\Phi})(t)\\
 &=& \overline{ \varphi}(t)   \frac{\varphi(t-(k-j)\alpha) h(t-(k-j)\alpha)}{\Phi(t+j\alpha)}\\
  &=&    \frac{\overline{ \varphi}(t)\varphi(t-(k-j)\alpha) h(t-(k-j)\alpha)}{\Phi(t)}\\
   &=& \delta_{jk} h(t).
\end{eqnarray*}
Thus, $\{ x_k \}_{ k \in \mathbb{Z}}$ is a b-Riesz basis.
\item The classic orthonormal basis for $L^2([0,\alpha])$ is $\{  \frac{1} {\sqrt{\alpha}}e^{\frac{2 \pi i nt}{\alpha}}  \}_{n \in \mathbb{Z}} $.
Using Lemma \ref{RieszL2},
$$\{  \frac{\varphi(t-k\alpha)} {\sqrt{\alpha}}e^{\frac{2 \pi i n(t-k\alpha)}{\alpha}}  \}_{k,n \in \mathbb{Z}} $$ is a Riesz basis for $L^2(\mathbb{R})$.
\item If $  \varphi(t) =1, \quad \forall t \in [0,\alpha]$, then $\{ x_k \}_{ k \in \mathbb{Z}}$ is a b-orthonormal basis.
\end{enumerate}
\end{example}

%$\varphi$ measurable, $ supp (\varphi) \subset [0,\alpha]$, and $ 0<a \leq \varphi \leq b < \infty$.  Then $\{  \frac{\varphi(t-k\alpha)} {\sqrt{\alpha}}e^{\frac{2 \pi i n(t-k\alpha)}{\alpha}}  \}_{k,n \in \mathbb{Z}}$ is a Riesz basis for $L^2(\mathbb{R})$.

%%%%%%%%%%%%%%%%%%%%%%%%%%%%%

\section{The tensor Product of b-frames  }
\label{sec:4}
In this section, for each $ i \in \{1,2\}$,  $\mathcal{H}_i$ and $\mathcal{Z}_i$ are two Hilbert spaces and $\mathcal{B}_i$ is a Banach space. The tensor product of frames and the tensor product of Riesz bases are characterized  in the following theorem by A. Bourouihiya.
\begin{thm}[\cite{Bou}] \label{bou} For each $ i \in \{1,2\}$, let $\{ z^i_k \}_{k \in \mathcal{K}} \subset \mathcal{Z}_i$.

 $\{ z^1_j \otimes z^2_k\}_{j,k \in \mathcal{K}}$ is a frame (or Riesz basis) for $\mathcal{Z}_1 \otimes \mathcal{Z}_2$ if and only if $\{ z^i_k\}_{k \in \mathcal{K}}$ is a frame (or Riesz basis) for $\mathcal{Z}_i $ for each $ i \in \{1,2\}$.

In addition the frame bounds of a frame $\{ z^1_j \otimes z^2_k\}_{j,k \in \mathcal{K}}$ are $A_1A_2$ and $B_1B_2$ if, for each $ i \in \{1,2\}$, the frame bounds of the frame $\{ z^i_k \}_{k \in \mathcal{K}} $ are $A_i$ and $B_i$.
\end{thm}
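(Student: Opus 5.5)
The plan is to reduce everything to operator statements about analysis, synthesis and frame operators, and then use that these operators factor as tensor products. For each $i$, I write $T_i:\mathcal{Z}_i\to\ell^2(\mathcal{K})$, $T_i z=\{\langle z,z^i_k\rangle\}_k$, for the analysis operator, $T_i^*$ for the synthesis operator and $S_i=T_i^*T_i$ for the frame operator. I use the identification $\ell^2(\mathcal{K})\otimes\ell^2(\mathcal{K})\cong\ell^2(\mathcal{K}\times\mathcal{K})$, under which $e_j\otimes e_k\mapsto\delta_{(j,k)}$.

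\emph{Sufficiency for frames.} Assume each $\{z^i_k\}$ is a frame with bounds $A_i,B_i$, so each $T_i$ is bounded. Then $T_1\otimes T_2$ is a bounded operator on $\mathcal{Z}_1\otimes\mathcal{Z}_2$. On elementary tensors it sends $u\otimes v$ to $\{\langle u,z^1_j\rangle\langle v,z^2_k\rangle\}_{j,k}=\{\langle u\otimes v,z^1_j\otimes z^2_k\rangle\}_{j,k}$. By linearity and continuity, $T_1\otimes T_2$ is therefore the analysis operator of $\{z^1_j\otimes z^2_k\}$ on the whole space. Its frame operator is $S=S_1\otimes S_2=(S_1\otimes I)(I\otimes S_2)$.

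The inequalities $A_1 I\le S_1\le B_1 I$ and $A_2 I\le S_2\le B_2 I$ pass to $S_1\otimes I$ and $I\otimes S_2$. These two operators are commuting and positive, so their product is positive, and I get
\[
A_1A_2\, I\le S\le B_1B_2\, I .
\]
To justify this, write $S-A_1A_2 I=(S_1-A_1 I)\otimes S_2+A_1 I\otimes(S_2-A_2 I)$; each term is a tensor product of positive operators, hence positive. The upper bound is handled the same way. This proves that the tensor family is a frame with bounds $A_1A_2$ and $B_1B_2$.

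\emph{Necessity for frames.} Suppose $\{z^1_j\otimes z^2_k\}$ is a frame with bounds $A,B$. Testing the frame inequality on $u\otimes v$ gives
\[
A\|u\|^2\|v\|^2\le\Big(\sum_j|\langle u,z^1_j\rangle|^2\Big)\Big(\sum_k|\langle v,z^2_k\rangle|^2\Big)\le B\|u\|^2\|v\|^2 .
\]
Fix a unit vector $v$. The lower bound shows that $c_v:=\sum_k|\langle v,z^2_k\rangle|^2$ is strictly positive. It is also finite, because taking any unit $u$ with $\sum_j|\langle u,z^1_j\rangle|^2>0$ (which the lower bound guarantees) the upper bound gives $c_v\le B/\sum_j|\langle u,z^1_j\rangle|^2<\infty$. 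Dividing by $c_v$ then shows that $\{z^1_j\}$ is a frame with bounds $A/c_v$ and $B/c_v$. The argument for $\{z^2_k\}$ is symmetric.

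This is the step I expect to be the delicate one. The point is that one must show that no factor sum is $0$ or $\infty$ before dividing, and the argument above is exactly the check that handles this.

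\emph{Riesz bases.} If each $\{z^i_k\}$ is a Riesz basis, write $z^i_k=U_i\epsilon^i_k$ with $\{\epsilon^i_k\}$ an orthonormal basis and $U_i$ bounded and bijective. Then $z^1_j\otimes z^2_k=(U_1\otimes U_2)(\epsilon^1_j\otimes\epsilon^2_k)$. Here $\{\epsilon^1_j\otimes\epsilon^2_k\}$ is an orthonormal basis of $\mathcal{Z}_1\otimes\mathcal{Z}_2$, and $U_1\otimes U_2$ is bounded and bijective with inverse $U_1^{-1}\otimes U_2^{-1}$. Hence the tensor family is a Riesz basis.

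Conversely, suppose the tensor family is a Riesz basis. By the frame part, each factor is a frame. A frame is a Riesz basis exactly when its synthesis operator is injective, so it suffices to check injectivity of $T_i^*$. Suppose $T_1^*c=0$ for some $0\ne c\in\ell^2$. Choose any $0\ne d\in\ell^2$. Then
\[
(T_1^*\otimes T_2^*)(c\otimes d)=T_1^*c\otimes T_2^*d=0,
\]
while $c\otimes d\neq0$. This contradicts the injectivity of the synthesis operator of the tensor Riesz basis, so $T_1^*$ is injective, and symmetrically $T_2^*$ is injective. Hence both factors are Riesz bases.
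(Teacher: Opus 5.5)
The paper gives no proof of this theorem. It is quoted from \cite{Bou} and used as a black box in the proof of Theorem~\ref{tensorbframe}, so there is no in-text route to compare yours against. Taken on its own, your proof is correct and complete, for the following reasons.
\begin{itemize}
\item Identifying $T_1\otimes T_2$ with the analysis operator of the tensor family is justified. Each coordinate functional $w\mapsto\langle w,z^1_j\otimes z^2_k\rangle$ is continuous, so agreement on the dense algebraic tensor product suffices.
\item The splitting $S-A_1A_2I=(S_1-A_1I)\otimes S_2+A_1I\otimes(S_2-A_2I)$ into tensor products of positive operators gives the lower bound. Likewise $B_1B_2I-S=(B_1I-S_1)\otimes B_2I+S_1\otimes(B_2I-S_2)$ gives the upper bound.
\item In the necessity step, testing on $u\otimes v$ and checking $0<c_v<\infty$ before dividing is exactly what is needed. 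By Tonelli, the double sum is $0$ as soon as $c_v=0$, even if the other factor were infinite.
\item The Riesz part is a correct reduction. A frame is a Riesz basis if and only if its synthesis operator is injective on $\ell^2$, and evaluating $T_1^*\otimes T_2^*$ on $c\otimes d$ settles injectivity.
\end{itemize}

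Two small remarks.

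First, the converse directions tacitly require $\mathcal{Z}_1,\mathcal{Z}_2\neq\{0\}$. If $\mathcal{Z}_1=\{0\}$, the tensor product is $\{0\}$ and the tensor family is trivially a frame, whatever $\{z^2_k\}$ is. The statement makes the same tacit assumption, and your choice of a unit vector is where it enters.

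Second, your operator route yields slightly more than the statement records. Since $S=S_1\otimes S_2$, one has $\|S\|=\|S_1\|\,\|S_2\|$ and $\|S^{-1}\|=\|S_1^{-1}\|\,\|S_2^{-1}\|$, so optimal bounds multiply to optimal bounds. Combined with Proposition~\ref{bdual}(2a), it also shows that the b-frame operator of $\{x^1_j\otimes x^2_k\}$ in Theorem~\ref{tensorbframe} is the tensor product of the two b-frame operators.
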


For $(h^1,h^2) \in  \mathcal{H}_1 \times \mathcal{H}_2, (z^1,z^2) \in  \mathcal{Z}_1 \times \mathcal{Z}_2,$ and $(x^1,x^2) \in  \mathcal{B}_1 \times \mathcal{B}_2,$ we define
\begin{eqnarray}\label{tensorbilinear}
\mathfrak{b}_1 \otimes \mathfrak{b}_2 ( h^1\otimes h^2, x^1\otimes x^2)= \mathfrak{b}_1(h^1,x^1) \otimes \mathfrak{b}_2 ( h^2,x^2).
\end{eqnarray}
This definition can algebraically be extended to define a bilinear mapping on vector spaces. Then, using the projective tensor product for the Banach spaces,
definition \ref{tensorbilinear} can  further be extended by completion  to define a unique bounded bilinear mapping $ \mathfrak{b}_1 \otimes \mathfrak{b}_2:  \mathcal{H}_1 \otimes \mathcal{H}_2 \times  \mathcal{B}_1 \otimes \mathcal{B}_2 \rightarrow \mathcal{Z}_1 \otimes \mathcal{Z}_2$ \cite{Rya}.

\begin{thm}\label{tensorbframe} For each $ i \in \{1,2\}$, let $\{ x^i_k \}_{k \in \mathcal{K}} \subset \mathcal{B}_i$.
\begin{enumerate}
\item $\{ x^1_j \otimes x^2_k\}_{j,k \in \mathcal{K}}$ is a b-frame (or b-Riesz basis) for $\mathcal{Z}_1 \otimes \mathcal{Z}_2$ if and only if $\{ x^i_k\}_{k \in \mathcal{K}}$ is a b-frame (or b-Riesz basis) for $\mathcal{Z}_i $ for each $ i \in \{1,2\}$.

\item if, for each $ i \in \{1,2\}$, the b-frame bounds of the frame $\{ x^i_k \}_{k \in \mathcal{K}} $ are $A_i$ and $B_i$, then  the b-frame bounds of the b-frame $\{ x^1_j \otimes x^2_k\}_{j,k \in \mathcal{K}}$ are $A_1A_2$ and $B_1B_2$.
\end{enumerate}
\end{thm}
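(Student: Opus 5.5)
The plan is to reduce everything to ordinary frames and Riesz bases, using Proposition \ref{bdual}(1) and Theorem \ref{Riesz}, and then to apply Theorem \ref{bou}. For each $i\in\{1,2\}$ we fix an orthonormal basis $\{e^i_j\}_{j\in\mathcal{K}}$ of $\mathcal{H}_i$. Then $\{e^1_j\otimes e^2_l\}_{j,l\in\mathcal{K}}$ is an orthonormal basis of $\mathcal{H}_1\otimes\mathcal{H}_2$.

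Applying Proposition \ref{bdual}(1) to the bilinear map $\mathfrak{b}_1\otimes\mathfrak{b}_2$ and the system $\{x^1_j\otimes x^2_k\}_{j,k}$ shows the following: the system is a b-frame for $\mathcal{Z}_1\otimes\mathcal{Z}_2$ with bounds $A,B$ if and only if
\[
\{(\mathfrak{b}_1\otimes\mathfrak{b}_2)(e^1_{j'}\otimes e^2_{l'},\,x^1_j\otimes x^2_k)\}_{j,k,j',l'}
=\{\mathfrak{b}_1(e^1_{j'},x^1_j)\otimes \mathfrak{b}_2(e^2_{l'},x^2_k)\}_{j,k,j',l'}
\]
is a frame with the same bounds. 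The equality of these two families is exactly the defining relation \eqref{tensorbilinear} on elementary tensors.

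After reindexing, with $(j',j)\mapsto$ a single index for the first factor and $(l',k)$ for the second, this family is the tensor product $\{u^1_p\otimes u^2_q\}_{p,q}$, where $u^i=\{\mathfrak{b}_i(e^i_{j'},x^i_j)\}_{j',j}$. This step uses that $\mathcal{K}\times\mathcal{K}$ is countable and that frame and Riesz-basis properties are invariant under rearranging the index set, since both are defined through unconditional sums.

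Theorem \ref{bou} then says that $\{u^1_p\otimes u^2_q\}$ is a frame (respectively a Riesz basis) for $\mathcal{Z}_1\otimes\mathcal{Z}_2$ if and only if each $u^i$ is a frame (respectively a Riesz basis) for $\mathcal{Z}_i$. It also says that bounds $A_i,B_i$ for the factors give bounds $A_1A_2,B_1B_2$ for the product. We translate back as follows:
\begin{itemize}
\item[] Proposition \ref{bdual}(1) turns "$u^i$ is a frame with bounds $A_i,B_i$" into "$\{x^i_k\}$ is a b-frame with bounds $A_i,B_i$". This gives the b-frame part of (1), and also (2).
\item[] Theorem \ref{Riesz}, (1)$\Leftrightarrow$(3), turns "$u^i$ is a Riesz basis" into "$\{x^i_k\}$ is a b-Riesz basis". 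Applied once more to the product system, with the orthonormal basis $\{e^1_{j'}\otimes e^2_{l'}\}$, it gives the b-Riesz basis part of (1).
\end{itemize}

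The main obstacle is justifying that $\mathfrak{b}_1\otimes\mathfrak{b}_2$ really is a continuous bilinear map into the Hilbert tensor product $\mathcal{Z}_1\otimes\mathcal{Z}_2$. Its first argument lies in the Hilbert tensor product $\mathcal{H}_1\otimes\mathcal{H}_2$, which is not the projective one. We also need the formula on elementary tensors $e^1_{j'}\otimes e^2_{l'}$ and $x^1_j\otimes x^2_k$, and this is precisely what the computation uses.

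I would handle this by fixing the elementary tensor $x^1\otimes x^2$ and noting that $h\mapsto(\mathfrak{b}_1\otimes\mathfrak{b}_2)(h,x^1\otimes x^2)$ is the Hilbert-space tensor product of the bounded operators $\mathfrak{b}_1(\cdot,x^1)$ and $\mathfrak{b}_2(\cdot,x^2)$. That operator is bounded with norm at most $M_1M_2\|x^1\|\|x^2\|$. Only elementary tensors $x^1_j\otimes x^2_k$ ever appear as second arguments, so this operator-level extension suffices and is consistent with the extension the paper cites from \cite{Rya}.

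A secondary point is that Proposition \ref{bdual}(1) is stated with the orthonormal basis indexed by $\mathcal{K}$. Since the proof never uses the specific index set, it applies verbatim with the index set $\mathcal{K}\times\mathcal{K}$.
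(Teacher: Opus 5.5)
Your proposal is correct and is essentially the paper's proof. You fix orthonormal bases of $\mathcal{H}_1,\mathcal{H}_2$, use Proposition~\ref{bdual}(1) (and Theorem~\ref{Riesz} for the Riesz case) to pass via \eqref{tensorbilinear} to the family $\{\mathfrak{b}_1(e^1_{j'},x^1_j)\otimes\mathfrak{b}_2(e^2_{l'},x^2_k)\}$, and then apply Theorem~\ref{bou} both for the equivalence and for the bounds. Your extra remarks simply make explicit details the paper leaves implicit: the reindexing over $\mathcal{K}\times\mathcal{K}$, the boundedness of $h\mapsto(\mathfrak{b}_1\otimes\mathfrak{b}_2)(h,x^1\otimes x^2)$ as the Hilbert tensor product of $\mathfrak{b}_1(\cdot,x^1)$ and $\mathfrak{b}_2(\cdot,x^2)$, and the explicit appeal to Theorem~\ref{Riesz} in the Riesz-basis steps.
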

\begin{proof} For each $ i \in \{1,2\}$, let $\{ e^i_j \}_{j \in \mathcal{K}}$ be an orthonormal basis in $\mathcal{H}_i$.  By \ref{tensorbilinear}, We have
\begin{eqnarray}\label{tensorbframe1}
 \mathfrak{b}_1 \otimes \mathfrak{b}_2 ( e^1_{j_1}\otimes e^2_{j_2}, x^1_{k_1}\otimes x^2_{k_2})= \mathfrak{b}_1(e^1_{j_1},x^1_{k_1}) \otimes \mathfrak{b}_2 (e^2_{j_2},x^2_{k_2}).
\end{eqnarray}

It is known that $\{ e^1_{j_1}\otimes e^2_{j_2}\}_{j_1,j_2 \in \mathcal{K}}$ is an orthonormal basis for $\mathcal{H}_1 \otimes \mathcal{H}_2$.  We consider the following statements.
\begin{enumerate}[(i)]
  \item  $\{ x^1_j \otimes x^2_k\}_{j,k \in \mathcal{K}}$ is a b-frame (or b-Riesz basis) for $\mathcal{Z}_1 \otimes \mathcal{Z}_2$.
  \item $ \{ \mathfrak{b}_1 \otimes \mathfrak{b}_2 ( e^1_{j_1}\otimes e^2_{j_2}, x^1_{k_1}\otimes x^2_{k_2}) \}_{j_1,j_2, k_1,k_2  \in \mathcal{K}}$ is a frame  (or Riesz basis)  for $\mathcal{Z}_1 \otimes \mathcal{Z}_2$.
  \item $ \{ \mathfrak{b}_1(e^1_{j_1},x^1_{k_1}) \otimes \mathfrak{b}_2 (e^2_{j_2},x^2_{k_2}) \}_{j_1,j_2, k_1,k_2  \in \mathcal{K}}$ is a frame (or Riesz basis)  for $\mathcal{Z}_1 \otimes \mathcal{Z}_2$.
  \item $ \{ \mathfrak{b}_1(e^1_{j_1},x^1_{k_1}) \}_{j_1, k_1  \in \mathcal{K}}$ is a  frame  (or Riesz basis) for $\mathcal{Z}_1 $ and $ \{  \mathfrak{b}_2 (e^2_{j_2},x^2_{k_2}) \}_{j_2,k_2  \in \mathcal{K}}$ is a frame (or Riesz basis)  for $\mathcal{Z}_2$.
  \item $\{ x^1_k\}_{k \in \mathcal{K}}$ is a b-frame  (or b-Riesz basis)  for $\mathcal{Z}_1 $ and $\{ x^2_k\}_{k \in \mathcal{K}}$ is a b-frame  (or b-Riesz basis)  for $\mathcal{Z}_2$.
\end{enumerate}

By Proposition \ref{frame}, (i) is equivalent to (ii). Using \ref{tensorbframe1}, (ii) is equivalent to (iii). By Theorem \ref{bou}, (iii) is equivalent to (iv). Finally, by Proposition \ref{frame}, (iv) is equivalent to (v).  Hence, Statement \emph{(1)} is proven.

To prove Statement \emph{(2)}, we use Theorem \ref{bou}.
\end{proof}

%%%%%%%%%%%%%%%%%%%%%%%%%%%%%%%%%%%%%%%
\section{B-frames and G-frames }
\label{sec:5}
\begin{thm}\label{bgframe}
\begin{enumerate}
\item  Let $\{x_k\}_{k\in \mathcal{K}} \subset \mathcal{B}$ and let $\mathfrak{b}: \mathcal{H} \times \mathcal{B} \to \mathcal{Z}$ be a continuous bilinear mapping.

    $(\{x_k\}_{k\in \mathcal{K}},\mathfrak{b})$ is a b-frame (b-Riesz basis) if and only if $\{\Lambda_k: z  \rightarrow \langle z / x_k \rangle \}_{k\in \mathcal{K}}$ is a g-frame (g-Riesz basis) with respect to the sequence of Hilbert spaces $ \{\mathcal{Z}_k = \overline{range(\Lambda_k)}\}_{k\in \mathcal{K}} $.
\item  Let $ \{\mathcal{Z}_k \}_{k\in \mathcal{K}} $ be a sequence of Hilbert spaces, let $\{\Lambda_k  \in \mathcal{L}(\mathcal{Z},\mathcal{Z}_k)\}_{k\in \mathcal{K}} $, and  Let\\ $ \mathcal{H}=\bigoplus_{k>0} \mathcal{Z}_k.$

$\{\Lambda_k \}_{k\in \mathcal{K}} $  is a g-frame (g-Riesz basis) with respect to  $ \{\mathcal{Z}_k \}_{k\in \mathcal{K}} $ if and only if\\  $(\{x_k:\mathcal{H} \to \mathcal{Z} \}_{k\in \mathcal{K}},\mathfrak{b}_0)$ is a b-frame (b-Riesz basis), where $\forall h \in \mathcal{Z}_k, x_k(h)=\Lambda_k^*(h)$ and $\ker (x_k)= \mathcal{Z}_k^\perp$.
\end{enumerate}
For each of the Statements (1) and (2),  the b-frame and g-frame share the same bounds and the b-frame operator coincide with the  g-frame operator.
 \end{thm}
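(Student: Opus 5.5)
For Statement (1), the plan is to compare the two defining inequalities directly. With $\Lambda_k(z)=\langle z/x_k\rangle$, each $\Lambda_k$ is linear in $z$, because the b-frame product is conjugate-linear in the second slot of the inner product and $\langle\cdot,\cdot\rangle_{\mathcal{Z}}$ is conjugate-linear in $z$. It is bounded, since
\[
\|\langle z/x\rangle\|_{\mathcal{H}}=\sup_{\|h\|=1}|\langle \mathfrak{b}(h,x),z\rangle_{\mathcal{Z}}|\leq M\|x\|_{\mathcal{B}}\|z\|_{\mathcal{Z}}.
\]
So $\Lambda_k\in\mathcal{L}(\mathcal{Z},\mathcal{Z}_k)$ with $\mathcal{Z}_k=\overline{range(\Lambda_k)}\subset\mathcal{H}$. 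The quantity $\sum_k\|\Lambda_k z\|^2_{\mathcal{Z}_k}$ is literally $\sum_k\|\langle z/x_k\rangle\|^2_{\mathcal{H}}$, so (\ref{bframe}) and (\ref{gframe}) coincide with the same bounds.

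For the operators, I compute $\Lambda_k^*$. From (\ref{bproduct}), $\langle \Lambda_k z,h\rangle_{\mathcal{H}}=\langle z,\mathfrak{b}(h,x_k)\rangle_{\mathcal{Z}}$, hence $\Lambda_k^*h=\mathfrak{b}(h,x_k)$ for $h\in\mathcal{Z}_k$. The g-frame operator is then $\sum_k\Lambda_k^*\Lambda_k z=\sum_k\mathfrak{b}(\langle z/x_k\rangle,x_k)$, which is $S$ from (\ref{operator}). For the Riesz part, I will use the standard characterization that a g-frame is a g-Riesz basis iff it is g-complete and $\sum_k\Lambda_k^*g_k=0$ with $\{g_k\}\in\bigoplus\mathcal{Z}_k$ forces $g_k=0$ (equivalently, the synthesis operator is injective). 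Since $\Lambda_k^*g_k=\mathfrak{b}(g_k,x_k)$, this says that representations $z=\sum_k\mathfrak{b}(h_k,x_k)$ are unique, i.e.\ the b-basis property.

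The main obstacle is that the b-basis definition allows arbitrary $h_k\in\mathcal{H}$, not only $h_k\in\mathcal{Z}_k$. I would handle this by noting that $\mathfrak{b}(h,x_k)=\Lambda_k^*(P_k h)$, where $P_k$ is the orthogonal projection onto $\mathcal{Z}_k$, because $\mathcal{Z}_k^\perp=\ker\Lambda_k^*$. Uniqueness is therefore to be read modulo $\mathcal{Z}_k^\perp$, or equivalently, for b-Riesz bases one shows $\mathcal{Z}_k=\mathcal{H}$. I would make this identification explicit and check it against Theorem \ref{Riesz}, which says that $\{\mathfrak{b}(e_j,x_k)\}$ is a Riesz basis. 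Existence of representations comes from the expansion (\ref{expansion}).

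For Statement (2), I define $x_k\in\mathcal{L}(\mathcal{H},\mathcal{Z})$ as $\Lambda_k^*$ on the $k$-th summand $\mathcal{Z}_k$ and $0$ on the other summands, so $x_k=\Lambda_k^*\pi_k$ with $\pi_k$ the coordinate projection. As computed before Proposition \ref{Gramian}, $\langle z/x_k\rangle_{\mathfrak{b}_0}=x_k^*z=\iota_k\Lambda_k z$, where $\iota_k$ is the isometric embedding. Hence $\|\langle z/x_k\rangle\|_{\mathcal{H}}=\|\Lambda_k z\|_{\mathcal{Z}_k}$, which gives the same inequalities and bounds (\ref{bgframe1}). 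The b-frame operator is $\sum_k x_kx_k^*=\sum_k\Lambda_k^*\Lambda_k$, which is the g-frame operator. For the Riesz equivalence, $\sum_k\mathfrak{b}_0(h_k,x_k)=\sum_k\Lambda_k^*(\pi_k h_k)$. Uniqueness of the b-representation therefore reduces to injectivity of the g-synthesis operator, again modulo the components of $h_k$ killed by $x_k$. This is the same subtlety as in Statement (1), and I would treat it identically.
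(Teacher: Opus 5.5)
Your treatment of the frame inequalities, the equality of the bounds and the identification of the operators is correct, and it is essentially the paper's argument. You compute $\Lambda_k^{*}h=\mathfrak{b}(h,x_k)$ for $h\in\mathcal{Z}_k$ (in (2), $x_k^{*}=\iota_k\Lambda_k$) and observe that (\ref{bframe}) and (\ref{gframe}) are literally the same inequality.

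The Riesz-basis part, however, is not proved, and the step you defer cannot be carried out. Your two readings are not equivalent. It is true that a b-basis forces every $\mathfrak{b}(\cdot,x_k)$ to be injective, i.e.\ $\mathcal{Z}_k=\mathcal{H}$, and b-Riesz $\Rightarrow$ g-Riesz in (1) goes through as you describe. But a g-Riesz basis with respect to $\mathcal{Z}_k=\overline{\mathrm{range}(\Lambda_k)}$ says nothing about $\mathcal{Z}_k^{\perp}=\ker\mathfrak{b}(\cdot,x_k)$. So in the direction g-Riesz $\Rightarrow$ b-Riesz you cannot show $\mathcal{Z}_k=\mathcal{H}$, and with the paper's definition of b-basis the implication is false. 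For a counterexample, take $\mathcal{H}=\mathcal{Z}=\ell^2$, an orthonormal basis $\{u_k\}$ of $\mathcal{Z}$, a unit vector $e\in\mathcal{H}$, $\mathfrak{b}=\mathfrak{b}_0$ and $x_k(h)=\langle h,e\rangle_{\mathcal{H}}u_k$. Then $\Lambda_k z=\langle z,u_k\rangle_{\mathcal{Z}}\,e$, $\mathcal{Z}_k=\mathrm{span}\{e\}$, and $\{\Lambda_k\}$ is even a g-orthonormal basis. But $\mathfrak{b}_0(h,x_k)=0$ for all $h\perp e$, so no $z$ has a unique representation $\sum_k\mathfrak{b}_0(h_k,x_k)$. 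Consistently, $\{\mathfrak{b}_0(e_j,x_k)\}_{j,k}$ contains zero vectors for any orthonormal basis $\{e_j\}$ containing $e$, so Theorem \ref{Riesz} also rejects it.

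In (2) it is worse, and ``treating it identically'' is impossible. Since $\ker x_k\supseteq\bigoplus_{j\neq k}\mathcal{Z}_j$, as soon as two of the $\mathcal{Z}_k$ are nonzero, $(\{x_k\},\mathfrak{b}_0)$ is never a b-basis. The g-orthonormal basis $\Lambda_k z=z_k$ of $\mathcal{Z}=\mathbb{C}^2$, with $\mathcal{Z}_1=\mathcal{Z}_2=\mathbb{C}$, is already a counterexample.

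The paper's proof has the same hole: its sentence equating uniqueness of $\sum_k\mathfrak{b}(h_k,x_k)$ with uniqueness of $\sum_k\Lambda_k^{*}(h_k)$ silently restricts $h_k$ to $\mathcal{Z}_k$. So you found the right weak spot. But ``read uniqueness modulo $\mathcal{Z}_k^{\perp}$'' changes the definition of b-basis rather than proving the statement. You should say explicitly that the Riesz clause is false as written, and state what you actually prove.

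For (1), what you can prove is the following:
\begin{enumerate}
\item b-Riesz $\Rightarrow$ g-Riesz holds as stated.
\item g-Riesz $\Rightarrow$ b-Riesz holds under the extra hypothesis that each $\mathfrak{b}(\cdot,x_k)$ is injective (equivalently, each $\Lambda_k$ has dense range in $\mathcal{H}$).
\item Without that hypothesis, g-Riesz $\Rightarrow$ b-Riesz holds if a b-basis is defined by uniqueness of the terms $\mathfrak{b}(h_k,x_k)$ rather than of the $h_k$.
\end{enumerate}
In that converse there is a further point to close. Injectivity of the synthesis operator only gives uniqueness for coefficient sequences in $\bigoplus_k\mathcal{Z}_k$, whereas the b-basis definition allows any convergent expansion. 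You close this by applying the bounded operators $\Lambda_jS^{-1}$ to $\sum_k\Lambda_k^{*}P_kh_k$ and using the biorthogonality $\Lambda_jS^{-1}\Lambda_k^{*}=\delta_{jk}I$ of a g-Riesz basis.

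For (2), with uniqueness of terms, the same argument gives g-Riesz $\Rightarrow$ b-Riesz. The converse additionally needs each $\Lambda_k^{*}$ to be injective on $\mathcal{Z}_k$, since otherwise, for example, $\Lambda_1z=(z,0)\in\mathbb{C}^2$ on $\mathcal{Z}=\mathbb{C}$ gives a counterexample.
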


 \begin{proof} \emph{(1)}  For each $k\in \mathcal{K}$, $\mathcal{Z}_k$ is a sub-Hilbert space of $\mathcal{H}$. For $(h,z) \in \mathcal{Z}_k \times \mathcal{Z}$, we have
 \begin{eqnarray*}
 \langle h,\Lambda_k(z) \rangle_{\mathcal{Z}_k}= \langle  h , \langle z / x_k \rangle \rangle_{\mathcal{Z}_k} =\langle \mathfrak{b}(h,x_k), z \rangle_{\mathcal{Z}}.
\end{eqnarray*}
Therefore, $\Lambda_k^*(h)=\mathfrak{b}(h,x_k)$, which is a bounded operator. Hence, \ref{bframe}  holds for $(\{x_k\}_{k\in \mathcal{K}},\mathfrak{b})$ if and only if \ref{gframe} holds for $\{\Lambda_k \}_{k\in \mathcal{K}}$, and so $(\{x_k\}_{k\in \mathcal{K}},\mathfrak{b})$ is a b-frame  if and only if $\{\Lambda_k \}_{k\in \mathcal{K}}$ is a g-frame  with respect to $ \{\mathcal{Z}_k \}_{k\in \mathcal{K}} $. In addition, the b-frame bounds for $(\{x_k\}_{k\in \mathcal{K}},\mathfrak{b})$ coincide with the  g-frame bounds for $\{\Lambda_k \}_{k\in \mathcal{K}}$.  We also have
\begin{eqnarray*}
S(z)=\sum_{k>0} \mathfrak{b}(\langle z / x_k \rangle,x_k )=\sum_{k\in \mathcal{K}} \Lambda_k^*\Lambda_k(z),
\end{eqnarray*}
and so $S$ is the b-frame operator for $(\{x_k\}_{k\in \mathcal{K}},\mathfrak{b})$ and the g-frame operator for $\{\Lambda_k \}_{k\in \mathcal{K}}$.

Now Suppose that $(\{x_k\}_{k\in \mathcal{K}},\mathfrak{b})$ is a b-frame, and so $\{\Lambda_k \}_{k\in \mathcal{K}}$ is a g-frame, as it is now proven. Using \ref{operator}, for each $z \in \mathcal{Z}$, we have
\begin{eqnarray*}
z=\sum_{k\in \mathcal{K}} \mathfrak{b}(\langle S^{-1}z / x_k \rangle,x_k )=\sum_{k\in \mathcal{K}} \Lambda_k^*\Lambda_kS^{-1}(z).
\end{eqnarray*}
Therefore,
\begin{eqnarray*}
z=\sum_{k\in \mathcal{K}} \mathfrak{b}(h_k,x_k )=\sum_{k\in \mathcal{K}} \Lambda_k^*(h_k),
\end{eqnarray*}
where $\forall k\in \mathcal{K}, h_k =\langle S^{-1}z / x_k \rangle=\Lambda_kS^{-1}(z) \in \mathcal{Z}_k$. Hence, $\sum_{k\in \mathcal{K}} \mathfrak{b}(h_k,x_k )$ is the unique expansion of $z$ with respect to $(\{x_k\}_{k\in \mathcal{K}},\mathfrak{b})$ if and only if $ \sum_{k\in \mathcal{K}} \Lambda_k^*(h_k)$ is the unique expansion of $z$ with respect to $\{\Lambda_k \}_{k\in \mathcal{K}}$. We conclude that $(\{x_k\}_{k\in \mathcal{K}},\mathfrak{b})$ is a b-Riesz basis if and only if $\{\Lambda_k \}_{k\in \mathcal{K}}$ is a g-Riesz basis.

\emph{(2)}  We first precise that  $ \mathcal{H}=\bigoplus_{k\in \mathcal{K}} \mathcal{Z}_k$ is a direct sum of Hilbert spaces, i.e.,
$$ \mathcal{H}=\left\{ \sum_{k\in \mathcal{K}} h_k: \forall k\in \mathcal{K}, h_k \in \mathcal{Z}_k \mbox{ and }   \sum_{k\in \mathcal{K}} \|h_k\|^2 < \infty \right\},$$
where $h_k \sim (0, \dots, 0,h_k,0, \dots)$ ($h_k$ at the $k^{th}$ position), endowed with the inner product
$$\left\langle \sum_{k\in \mathcal{K}} h_k,\sum_{k>0} h'_k \right\rangle_{\mathcal{H}} = \sum_{k\in \mathcal{K}} \langle h_k,h'_k \rangle_{\mathcal{Z}_k}.$$
 It is known that $\mathcal{H}$ is a Hilbert space and $\mathcal{Z}_j$ is orthogonal to $\mathcal{Z}_k$ if $j \neq k$ \cite{Con}.

Let $h=\sum_{j\in \mathcal{K}} h_j \in \mathcal{H}$, where $\forall j \in \mathcal{K}, h_j \in \mathcal{Z}_j$.  Since $x_k(l)=\Lambda_k^*(l)$ if $l \in \mathcal{Z}_k$,  and $x_k(l)=0$ if $l \in \mathcal{Z}_j$ and $j \neq k$, we have $x_k(h)=\Lambda_k^*(h_k)$. For $z \in \mathcal{Z}$, we then have
\begin{eqnarray*}
 \langle h , x_k^*(z) \rangle_{\mathcal{H}}=  \langle x_k(h) , z \rangle_{\mathcal{Z}}= \langle \Lambda_k^*(h_k) , z \rangle_{\mathcal{Z}}= \langle h_k.  \Lambda_k(z) \rangle_{\mathcal{Z}_k} =\langle h , \Lambda_k( z )\rangle_{\mathcal{H}}.
\end{eqnarray*}
The last equality holds because $\mathcal{Z}_j$ is orthogonal to $\mathcal{Z}_k$ if $j \neq k$. Therefore, $x_k^*=\Lambda_k$, and so $\langle z / x_k\rangle_{\mathfrak{b}_0}= x_k^*(z)=\Lambda_k(z)$. Following, the same steps in the proof of Statement \emph{(1)}, we can finish the proof of Statement \emph{(2)}.

For each of the Statements \emph{(1)} and \emph{(2)}, it is clear that the b-frame and g-frame have the same bounds and the b-frame operator coincide with the  g-frame operator.
\end{proof}

%%%%%%%%%%%%%%%%%%%%%%%%%%%%%%%%%%%%%%%
\section{Frames induced by  b-frames }
\label{sec:6}
Generalizing frames allows researchers to prove properties of standard frames \cite{Kho1} and use the generalized framework to build examples of frames \cite{Chr, Sun}.
Statement \emph{(1)} of Proposition 2.1 allows one to construct frames for $\mathcal{Z}$ induced by  b-frames for $\mathcal{Z}$ and orthonormal bases for $\mathcal{H}$. We used this fact, in Example \ref{RieszE}, to construct a Riesz basis for $L^2(\mathbb{R})$ using a b-Riesz basis for $L^2(\mathbb{R})$ and an orthonormal basis for $L^2([0,\alpha])$. However, orthonormal bases are known to be generally impractical for applications. In this section, we prove that frames for $\mathcal{Z}$ can be induced  by b-frames for $\mathcal{Z}$ and a frames for $\mathcal{H}$.

\begin{thm}\label{induced} Let $\{f_j\}_{j \in \mathcal{K}}$  be a frame for  $\mathcal{H}$, with the frame bounds $A$ and $B$, and let $\{\tilde{f_j}\}_{j \in \mathcal{K}}$ be its canonical dual frame. Let $\{x_k\}_{k \in \mathcal{K}} \subset \mathcal{B}$.
\begin{enumerate}
\item $(\{x_k\}_{k \in \mathcal{K}},\mathfrak{b})$ is a b-frame for $\mathcal{Z}$ with the b-frame bounds $A_b$ and $B_b$, if and only if   $\{\mathfrak{b}(f_j,x_k)\}_{j,k \in \mathcal{K}}$ is a  frame for $\mathcal{Z}$ with the frame bounds $AA_b$ and $BB_b$.
\item Suppose that $\{f_j\}_{j \in \mathcal{K}}$ is a Riesz basis.

  $(\{x_k\}_{k \in \mathcal{K}},\mathfrak{b})$ is a  b-Riesz basis for $\mathcal{Z}$ if and only if $\{\mathfrak{b}(f_j,x_k)\}_{j,k \in \mathcal{K}}$ is a Riesz basis for $\mathcal{Z}$.

\end{enumerate}
 \end{thm}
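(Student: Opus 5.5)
The proof rests on one identity. For $z \in \mathcal{Z}$, the b-frame product \ref{bproduct} gives
\begin{eqnarray*}
\sum_{j,k \in \mathcal{K}} | \langle z, \mathfrak{b}(f_j,x_k) \rangle_\mathcal{Z} |^2 = \sum_{k \in \mathcal{K}} \sum_{j \in \mathcal{K}} | \langle \langle z/x_k \rangle, f_j \rangle_\mathcal{H} |^2 .
\end{eqnarray*}
For each fixed $k$, we apply the frame inequality \ref{frame} for $\{f_j\}_{j \in \mathcal{K}}$ to the vector $h=\langle z/x_k\rangle$. This bounds the inner sum between $A\|\langle z/x_k\rangle\|^2_\mathcal{H}$ and $B\|\langle z/x_k\rangle\|^2_\mathcal{H}$. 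Summing over $k$ yields
\begin{eqnarray*}
A\sum_{k} \|\langle z/x_k\rangle\|^2_\mathcal{H} \leq \sum_{j,k} | \langle z, \mathfrak{b}(f_j,x_k) \rangle_\mathcal{Z} |^2 \leq B\sum_{k} \|\langle z/x_k\rangle\|^2_\mathcal{H}.
\end{eqnarray*}
All terms are nonnegative, so the sums may be interchanged freely.

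For Statement (1), suppose first that $(\{x_k\},\mathfrak{b})$ is a b-frame with bounds $A_b, B_b$. Inserting \ref{bframe} into the double inequality gives frame bounds $AA_b$ and $BB_b$ for $\{\mathfrak{b}(f_j,x_k)\}$. Conversely, suppose $\{\mathfrak{b}(f_j,x_k)\}$ is a frame with bounds $AA_b$ and $BB_b$. Dividing through gives b-frame bounds $A_b A/B$ and $B_b B/A$. In particular $(\{x_k\},\mathfrak{b})$ is a b-frame, but this crude argument does not return exactly $A_b,B_b$. I would read the converse direction as the qualitative statement together with these bounds, or note the loss explicitly. This bookkeeping of the constants is the only delicate point in (1).

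For Statement (2), the structural route is cleaner than manipulating inequalities. Since $\{f_j\}$ is a Riesz basis, there is an orthonormal basis $\{e_j\}$ of $\mathcal{H}$ and a bounded bijection $T:\mathcal{H}\to\mathcal{H}$ with $f_j = Te_j$. I then want a bounded bijection $W:\mathcal{Z}\to\mathcal{Z}$ with $W\mathfrak{b}(e_j,x_k)=\mathfrak{b}(Te_j,x_k)$. The natural candidate is defined on finite sums by
\begin{eqnarray*}
W\Big(\sum_k \mathfrak{b}(h_k,x_k)\Big) = \sum_k \mathfrak{b}(Th_k,x_k).
\end{eqnarray*}
This is well defined and bounded when $\{x_k\}$ is a b-Riesz basis, since Theorem \ref{Riesz}(3) makes the representation unique and norm-equivalent to $(\sum\|h_k\|^2)^{1/2}$. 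Its inverse is built the same way from $T^{-1}$. Hence, if $\{x_k\}$ is a b-Riesz basis, then $\{\mathfrak{b}(e_j,x_k)\}$ is a Riesz basis by Theorem \ref{Riesz}, and its image under $W$, namely $\{\mathfrak{b}(f_j,x_k)\}$, is a Riesz basis.

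For the converse I would avoid constructing $W$ and instead use the characterization of Riesz bases as frames that are $\omega$-independent, that is, exact frames. Suppose $\{\mathfrak{b}(f_j,x_k)\}$ is a Riesz basis. By (1), $\{x_k\}$ is a b-frame. It remains to show that $\{x_k\}$ is a b-basis. Assume $\sum_k \mathfrak{b}(h_k,x_k)=0$ with $\sum_k\|h_k\|^2<\infty$. Expand each $h_k=\sum_j c_{jk} f_j$ with coefficients $c_{jk}=\langle h_k,\tilde f_j\rangle$, which are square-summable in $(j,k)$. Then $\sum_{j,k} c_{jk}\mathfrak{b}(f_j,x_k)=0$, so $\omega$-independence forces all $c_{jk}=0$, hence all $h_k=0$. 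Existence of a representation of each $z$ comes from the b-frame expansion \ref{expansion}.

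The main obstacle is convergence and interchange of the double series inside the continuous bilinear map $\mathfrak{b}$, i.e. justifying $\mathfrak{b}(\sum_j c_{jk}f_j,x_k)=\sum_j c_{jk}\mathfrak{b}(f_j,x_k)$ and summing over $k$ unconditionally. I would handle it via the $\ell^2$ bounds supplied by the Bessel property of $\{\mathfrak{b}(f_j,x_k)\}$ established in (1).
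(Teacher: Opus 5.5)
Your proposal is correct. Part (1) and the converse half of (2) follow the paper's arguments. For (1) you use the same identity and the same two-sided estimate. For the converse you use the same expansion $h_k=\sum_j\langle h_k,\tilde f_j\rangle f_j$, followed by $\omega$-independence of the Riesz basis $\{\mathfrak{b}(f_j,x_k)\}$.

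Where you genuinely differ is the forward half of (2). The paper stays elementary. By (1), $\{\mathfrak{b}(f_j,x_k)\}$ is a frame. If $\sum_{j,k}\lambda_{j,k}\mathfrak{b}(f_j,x_k)=0$, then $\sum_k\mathfrak{b}(\sum_j\lambda_{j,k}f_j,x_k)=0$. Uniqueness in the b-basis gives $\sum_j\lambda_{j,k}f_j=0$ for every $k$, and the Riesz property of $\{f_j\}$ gives $\lambda_{j,k}=0$. Since an $\omega$-independent frame is a Riesz basis, this finishes the paper's proof.

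You instead pass through Theorem \ref{Riesz} to the Riesz basis $\{\mathfrak{b}(e_j,x_k)\}$ and push it forward by $W=\Phi\,\mathcal{T}\,\Phi^{-1}$. Here $\Phi((h_k)_k)=\sum_k\mathfrak{b}(h_k,x_k)$ is an isomorphism of $\ell^2(\mathcal{K};\mathcal{H})$ onto $\mathcal{Z}$, and $\mathcal{T}$ applies $T$ in each coordinate. This is valid: the norm equivalence you invoke does follow from Theorem \ref{Riesz}(3) by expanding each $h_k$ in $\{e_j\}$. It also exhibits an explicit automorphism of $\mathcal{Z}$ carrying one system onto the other. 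The price is the well-definedness and extension step and the reliance on Theorem \ref{Riesz}. The paper's argument is shorter and is the mirror image of its proof of the converse.

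Two side remarks.

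First, your caution about the constants in (1) is justified. In the converse the paper also only obtains bounds $A_0/B$ and $B_0/A$, and then re-applies the forward direction. The literal converse is in fact false. Take $\mathcal{H}=\mathcal{Z}=\mathbb{C}^2$, $\mathfrak{b}=\mathfrak{b}_0$, $f_1=e_1$, $f_2=2e_2$ (so $A=1$, $B=4$), $x_1=\mathrm{diag}(2,1/2)$ and $x_2=0$. Then $\{\mathfrak{b}(f_j,x_k)\}=\{2e_1,e_2,0,0\}$ is a frame with bounds $1=A\cdot 1$ and $4=B\cdot 1$. Yet $\sum_k\|\langle z/x_k\rangle\|^2=4|z_1|^2+\frac14|z_2|^2$, so $1,1$ are not b-frame bounds.

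Second, in the converse of (2) you only test representations with $\sum_k\|h_k\|^2<\infty$. The paper's definition of b-basis asks for uniqueness among all convergent representations. This is easily removed. Pair $0=\sum_k\mathfrak{b}(h_k,x_k)$ with the dual Riesz basis $\{\tilde g_{jk}\}$ of $g_{jk}=\mathfrak{b}(f_j,x_k)$. This gives $\langle h_{k'},\tilde f_{j'}\rangle=0$ for all $j',k'$, with no square-summability needed.
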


\begin{proof}
\emph{(1)} Using the fact that $\{f_j\}_{j \in \mathcal{K}}$ is a frame for $\mathcal{H}$,  for each $k \in \mathcal{K} $ and each $z \in \mathcal{Z}$, we have
\begin{eqnarray*}
A \|\langle z/x_k \rangle \|_{\mathcal{H}}^2 \leq   \sum_{j\in \mathcal{K} } | \langle  \mathfrak{b}(f_j,x_k), z \rangle_{\mathcal{Z}}|^2=\sum_{j\in \mathcal{K} } | \langle  f_j, \langle z/x_k \rangle \rangle_{\mathcal{H}}|^2 \leq B \|\langle z/x_k \rangle \|_{\mathcal{H}}^2,
\end{eqnarray*}
and so
\begin{eqnarray}\label{induced1}
A \sum_{k\in \mathcal{K} } \|\langle z/x_k \rangle \|_{\mathcal{H}}^2 \leq   \sum_{j,k\in \mathcal{K} } | \langle  \mathfrak{b}(f_j,x_k), z \rangle_{\mathcal{Z}}|^2 \leq B\sum_{k\in \mathcal{K} } \|\langle z/x_k \rangle \|_{\mathcal{H}}^2.
\end{eqnarray}

\emph{(i)} If $(\{x_k\}_{k \in \mathcal{K}},\mathfrak{b})$ is a b-frame for $\mathcal{Z}$ with the b-frame bounds $A_b$ and $B_b$, then \ref{bframe}   and \ref{induced1}
yield
\begin{eqnarray*}
AA_b \|z \|_{\mathcal{Z}}^2 \leq   \sum_{j,k\in \mathcal{K} } | \langle  \mathfrak{b}(f_j,x_k), z \rangle_{\mathcal{Z}}|^2 \leq BB_b \|z \|_{\mathcal{Z}}^2.
\end{eqnarray*}
Hence, $\{\mathfrak{b}(f_j,x_k)\}_{j,k \in \mathcal{K}}$ is a frame for $\mathcal{Z}$ with the frame bounds $AA_b$ and $BB_b$.

\emph{(ii)} Conversely, suppose $\{\mathfrak{b}(f_j,x_k)\}_{j,k \in \mathcal{K}}$ is a  frame for $\mathcal{Z}$ with the frame bounds $A_0$ and $B_0$. Therefore, for each $z \in \mathcal{Z}$, we have
\begin{eqnarray*}
A_0 \|z \|_{\mathcal{Z}}^2 \leq   \sum_{j,k\in \mathcal{K} } | \langle  \mathfrak{b}(f_j,x_k), z \rangle_{\mathcal{Z}}|^2 \leq B_0 \|z \|_{\mathcal{Z}}^2.
\end{eqnarray*}
Using \ref{induced1} and the last inequalities, we obtain
\begin{eqnarray*}
\frac{A_0}{B} \|z \|_{\mathcal{Z}}^2 \leq   \sum_{k\in \mathcal{K} } \|\langle z/x_k \rangle \|_{H}^2  \leq \frac{B_0}{A} \|z \|_{\mathcal{Z}}^2.
\end{eqnarray*}

Thus, $(\{x_k\}_{k \in \mathcal{K}},\mathfrak{b})$ is a b-frame for $\mathcal{Z}$. Suppose this b-frame has the b-frame bounds $A_b$ and $B_b$. Using step  \emph{(i)}, the frame  $\{\mathfrak{b}(f_j,x_k)\}_{j,k \in \mathcal{K}}$ has the frame bounds $AA_b$ and $BB_b$.

\emph{(2)}  Suppose that $(\{x_k\}_{k \in \mathcal{K}},\mathfrak{b})$ is a b-Riesz basis.  Using Statement \emph{(1)}, $\{\mathfrak{b}(f_j,x_k)\}_{j,k \in \mathcal{K}}$ is then a frame.  Suppose that
$$ 0= \sum_{k \in \mathcal{K}} \sum_{j \in \mathcal{K}} \lambda_{j,k}\mathfrak{b}(f_j,x_k)=\sum_{k \in \mathcal{K}} \mathfrak{b}(\sum_{j \in \mathcal{K}} \lambda_{j,k}f_j,x_k).$$
The fact that $(\{x_k\}_{k \in \mathcal{K}},\mathfrak{b})$  is b-Riesz basis implies
$$ \forall k \in \mathcal{K}, \sum_{j \in \mathcal{K}} \lambda_{j,k}f_j=0,$$
and the fact that $\{f_j\}_{j \in \mathcal{K}}$ is a Riesz basis implies then that $ \forall j,k,  \lambda_{j,k}=0.$ Therefore, $\{\mathfrak{b}(f_j,x_k)\}_{j,k \in \mathcal{K}}$ is a Riesz basis.

Conversely, suppose that $\{\mathfrak{b}(f_j,x_k)\}_{j,k \in \mathcal{K}}$ is a Riesz basis.  Using  Statement \emph{(1)}, $(\{x_k\}_{k \in \mathcal{K}},\mathfrak{b})$ is then a b-frame.  Suppose that
$$ 0=\sum_{k \in \mathcal{K}} \mathfrak{b}(h_k,x_k)= \sum_{k \in \mathcal{K}} \mathfrak{b}(\sum_{j \in \mathcal{K}}\langle h_k,\tilde{f_j}\rangle_{\mathcal{H}} f_j,x_k)= \sum_{k \in \mathcal{K}} \sum_{j \in \mathcal{K}}\langle h_k,\tilde{f_j}\rangle_{\mathcal{H}}
\mathfrak{b}( f_j,x_k).$$
the fact that $\{\mathfrak{b}(f_j,x_k)\}_{j,k \in \mathcal{K}}$ is a Riesz basis implies $ \forall j,k, \langle h_k,\tilde{f_j}\rangle_{\mathcal{H}}=0$. Using the fact that $\{\tilde{f}_j\}_{j \in \mathcal{K}}$ is a Riesz basis implies that $ \forall k, h_k=0$. Therefore, $(\{x_k\}_{k \in \mathcal{K}},\mathfrak{b})$ is a b-Riesz basis. Hence, we finish the proof of Statement \emph{(2)}.

\end{proof}

\begin{rem}
One of the applications of g-frames is the construction of  frames for a Hilbert space induced by g-frames and frames in another Hilbert space. The main theorem allowing that in  \cite{Sun} uses a g-frame and a sequence of frames for which the sequences of frame upper and lower bounds are bounded away from zero and infinity. Meanwhile,  Theorem \ref{induced} requires no restriction on the b-frame to construct a frame for a Hilbert space induced by a g-frame and a frame in another Hilbert space. Using Theorems \ref{bgframe} and \ref{induced}, we can state a theorem on induced frames by a g-frame.
\end{rem}

\begin{example}
\begin{enumerate}
  \item  Let $\varphi(t)= \frac{e^{-t^2}}{2}$.  The b-frame $(\{ M_{\varphi^k}U\}_{k \geq 0}, \mathfrak{b}_0)$ defined in Example \ref{GramianE}, where $U: L^2(\mathbb{R}) \to L^2([0,1])$ is the restriction operator,   has the b-frame bounds
\begin{eqnarray*}
A_b = \frac{1}{\|1 -|\varphi|^2 \|_{L^\infty([0,1])} }= \frac{4e^2}{4e^2-1} \mbox{ and } B_b=||\frac{1}{1 -|\varphi|^2}||_{L^\infty([0,1])}=\frac{4}{3}.
\end{eqnarray*}

Let $A$ and $B$ be the frame bounds of the Gabor frame $\{ e^{2 \pi i n \beta t} e^{-\pi(t- m\alpha)^2}, m,n \in \mathbb{Z} \}$ for $L^2(\mathbb{R})$, where $\alpha \beta <1$. Values, in terms of $\alpha$ and  $\beta$, for $A$ and $B$ are published \cite{Gro}.

 Using Theorem \ref{induced},
  $\mathcal{F}=\{  e^{2 \pi i n \beta t} e^{-\pi(t- m\alpha)^2 -kt^2}, m,n \in \mathbb{Z} \mbox{ and } k \geq 0 \}$ is a frame for $L^2([0,1])$ with the frame bounds $AA_b$ and $BB_b$.

  A Direct calculation shows that $A$ and $\frac{4}{3}B$ are frame bounds for $\mathcal{F}$. Using Theorem \ref{induced}, the lower frame bound is $AA_b >A$, which is an improvement of the lower bound.

\end{enumerate}

\end{example}

\bigskip
\noindent
{\bf Acknowledgment.}
The authors would like to express their gratitude to Dr. Samir Kabbaj for his valuable discussions and suggestions, which helped improve the content of this paper.

%%%%%%%%%%%%%%%%%%%%%%%%%%%%%%%%%%%%%%%%%%%%%%%%%%%%%%%%%%


\begin{thebibliography}{99.}%
% and use \bibitem to create references.
%
% Use the following syntax and markup for your references if
% the subject of your book is from the field
% "Mathematics, Physics, Statistics, Computer Science"
%
% Contribution
\bibitem{Bou} Bourouihiya, A., 2008. The tensor product of frames. Sampling theory in signal and Image processing, 7(1), pp.65-76.
\bibitem{Chr} Christensen, O., An Introduction to Frames and Riesz Bases, Birkh‰ user, Boston, 2003. MR1946982 (2003k: 42001).
\bibitem{Con} Conway, J.B., 2019. A course in functional analysis. Springer.
\bibitem{Gro} Gröchenig, K., 2001. Foundations of time-frequency analysis (Vol. 359). Boston: Birkhäuser.
\bibitem{Ism}Ismailov, M., Guliyeva, F. and Nasibov, Y., 2016. On a generalization of the Hilbert frame generated by the bilinear mapping. Journal of Function Spaces, 2016(1), p.9516839.
\bibitem{Kho}Khosravi, A. and Azandaryani, M.M., 2012. Fusion frames and g-frames in tensor product and direct sum of Hilbert spaces. Applicable Analysis and Discrete Mathematics, pp.287-303.
\bibitem{Kho1} Khosravi, A., Farmani, M. R. (2022). Frame and g-frame in Hilbert spaces. Mathematics and Computational Sciences, 3(1), 10-16.
\bibitem{Mez} Mezzat, C. and Kabbaj, S., 2024. $ K $-$ b $-Frames for Hilbert Spaces and the $ b $-Adjoint Operator. Sahand Communications in Mathematical Analysis, 21(4), pp.1-26.
\bibitem{Rya} Ryan, R.A. and a Ryan, R., 2002. Introduction to tensor products of Banach spaces (Vol. 73). London: Springer.
\bibitem{Son} Sondergaard, P.L., 2007. Gabor frames by sampling and periodization. Advances in Computational Mathematics, 27(4), pp.355-373.
\bibitem{Sun} Sun, W., 2006. G-frames and g-Riesz bases. Journal of Mathematical Analysis and Applications, 322(1), pp.437-452.
\bibitem{Zha} Zhang, J., Gao, F. and Hong, G., 2024. A Note on Injective g-Frames in Quaternionic Hilbert Spaces. Axioms, 13(12), p.851.
\end{thebibliography}
\end{document}